\newtheorem{thm}{Theorem}[section]
\newtheorem{prop}[thm]{Proposition}
\newtheorem{rem}[thm]{Remark}
\numberwithin{equation}{section}
\newcommand{\beq}{\begin{equation}}
\newcommand{\eeq}{\end{equation}}
\def\qed{\,\unskip\kern 6pt \penalty 500
\raise -2pt\hbox{\vrule \vbox to8pt{\hrule width 6pt
\vfill\hrule}\vrule}\par}
\definecolor{darkblue}{rgb}{0.05, .05, .65}
\definecolor{darkgreen}{rgb}{0.05, .70, .05}
\definecolor{darkred}{rgb}{0.8,0,0}
\def\qed{\unskip\kern 6pt \penalty 500
\raise -2pt\hbox{\vrule \vbox to8pt{\hrule width 6pt
\vfill\hrule}\vrule}\par}
\begin{document}

\title[Smoothing effects for the PME on Cartan-Hadamard manifolds]{\textbf{ \large Smoothing effects \\ for the porous medium equation \\ on Cartan-Hadamard manifolds}}

\author{Gabriele Grillo, Matteo Muratori}

\address{Gabriele Grillo: Dipartimento di Matematica, Politecnico di Milano, Piaz\-za Leonardo da Vinci 32, 20133 Milano, Italy}
\email{gabriele.grillo@polimi.it}

\address{Matteo Muratori: Dipartimento di Matematica ``F. Enriques'', Universit\`a degli Studi di Milano, via Cesare Saldini 50, 20133 Milano, Italy}
\email{matteo.muratori@unimi.it}


\begin{abstract}
We prove three sharp bounds for solutions to the porous medium equation posed on Riemannian manifolds, or for weighted versions of such equation. Firstly we prove a smoothing effect for solutions which is valid on any Cartan-Hadamard manifold whose sectional curvatures are bounded above by a strictly negative constant. This bound includes as a special case the sharp smoothing effect recently proved by V\'azquez on the hyperbolic space in \cite{V}, which is similar to the absolute bound valid in the case of bounded Euclidean domains but has a logarithmic correction. Secondly we prove a bound which interpolates between such smoothing effect and the Euclidean one, supposing a suitable quantitative Sobolev inequality holds, showing that it is sharp by means of explicit examples. Finally, assuming a stronger functional inequality of sub-Poincar\'e type, we prove that the above mentioned (sharp) absolute bound holds, and provide examples of weighted porous media equations on manifolds of infinite volume in which it holds, in contrast with the non-weighted Euclidean situation. It is also shown that sub-Poincar\'e inequalities cannot hold on Cartan-Hadamard manifolds.
\end{abstract}

\maketitle

\section{Introduction}

We shall consider the following Cauchy problem for the porous medium equation (PME in the sequel)
\begin{equation}\label{eq-non-pesata}
\begin{cases}
u_t =  \Delta (u^m) & \textrm{in } M \times \mathbb{R}^{+} \, , \\
u =u_0 \in L^1(M) & \textrm{on } M \times \{ 0 \} \, .
\end{cases}
\end{equation}
where we suppose throughout this paper that $m>1$, and the equation is posed on a Riemannian manifold $M$, $\Delta$ indicating of course its (negative) Riemannian Laplacian. We use throughout the paper, without further comment, the usual convention $u^m:=|u|^{m-1}u$. We shall in particular be interested in  the case in which $M$ is a Cartan-Hadamard manifold, namely a complete and simply connected manifold of nonpositive sectional curvature. The study of such kind of evolutions has been initiated in \cite{BGV} in the fast diffusion case, i.e.\ when $m<1$. In particular, solutions corresponding to a significant class of initial data vanish in finite time for all $m<1$, in striking contrast with the Euclidean situation, where this happens only if $m<m_c=(d-2)/d$. In a sense, the setting considered has closer similarities with the case of an Euclidean \it bounded \rm domain, where similar phenomena occur (see e.g.\ \cite{DKW}, \cite{BGV2}). Recently these topics have been investigated further in \cite{GM2}, where detailed asymptotic results are proven in the most important example of negatively curved manifold, namely the $d$-dimensional hyperbolic space ${\mathbb H}^d$.

The porous medium case $m>1$ has instead been less studied till a very recent paper by J. L. V\'azquez \cite{V}. In it, a careful analysis of the nonlinear flow is carried out on ${\mathbb H}^d$. The problem is connected in the radial case, via a change of variable, to a \it weighted \rm PME-type equation in the \it Euclidean \rm setting, and several striking results on the detailed behaviour of solutions are proved. In particular, it is shown that for radial data, say, with $\|u_0\|_1=1$, the following bound holds:
\begin{equation}\label{eq: smoothing-hyp}
\|u(t)\|_\infty\le C\left(\frac{\log t}t\right)^{\frac1{m-1}}\ \ \ \forall t\ge2,
\end{equation}
where $C$ is a suitable positive constant. V\'azquez also shows that the bound \eqref{eq: smoothing-hyp} is sharp. Moreover, the evolution of the support of solutions corresponding to compactly supported data is also studied, with sharp asymptotic results, in \cite{V}. Weaker results on this latter problem were given previously in \cite{Pu}.

The fact that the solution $u(t)$ is bounded at all $t>0$ is not difficult to prove as a consequence of the Sobolev or Gagliardo-Nirenberg inequalities (see e.g.\  \cite{BG}, \cite{GM}, \cite{GM3}), which are well known to hold in ${\mathbb H}^d$. However, the quantitative form of the bound, for large $t$, is significantly different from its Euclidean counterpart and from what could be deduced from the above mentioned functional inequalities. In fact, the (best possible) Euclidean upper bound involves the dimension-dependent quantity $t^{-d/[d(m-1)+2]}$. Notice that \[d/[d(m-1)+2]<1/(m-1),\] hence the decay rate predicted by \eqref{eq: smoothing-hyp} is \it faster \rm than its Euclidean counterpart, in analogy with a similar well-known property for the linear heat equation on ${\mathbb H}^d$. Note also that the bound in \eqref{eq: smoothing-hyp} differs only by a logarithmic correction from the one valid for solutions $v(t)$ to the PME on bounded Euclidean domains with homogeneous Dirichlet boundary conditions, namely:
\[
\|v(t)\|_\infty\le \frac C{t^{\frac1{m-1}}}.
\]
Again, the smoothing effect for the PME on ${\mathbb H}^d$ is in some sense closer to the one valid in bounded Euclidean domains than it is to the one valid on the whole Euclidean space.

\smallskip
Our goal here is to investigate in more detail bounds similar to \eqref{eq: smoothing-hyp}. In fact, we shall show in Section \ref{SE} that the smoothing effect \eqref{eq: smoothing-hyp} is true in any Cartan-Hadamard manifold whose sectional curvature is bounded above by a suitable constant $-k<0$. In fact, the proof will rely only on functional inequalities, namely on the combination of the Sobolev inequality and of the Poincar\'e-type, or spectral gap, inequality
\[
\|\nabla f\|^2_2\ge \Lambda \|f\|_2^2\ \ \ \forall f \in C_c^\infty(M) \, .
\]
Both inequalities are in fact known to hold under the stated geometric assumption. We thus enlarge considerably the class of manifolds on which \eqref{eq: smoothing-hyp} is valid, and connect such bound with clear geometric hypotheses. Of course the result is sharp, since this fact is true already on ${\mathbb H}^d$. The result is contained in Theorem \ref{teo: log-1}.

\smallskip
In Section \ref{deg} we deal with an apparently technical modification of the above result. We show that, if a family of Sobolev inequalities involving $L^q$ norms with $q\in(2,2^*]$, $2^*$ being the usual Sobolev exponent, holds with a proportionality constant having a controlled rate of divergence as $q\to2$, then a bound similar to \eqref{eq: smoothing-hyp} holds, but the upper bound is given in terms of the quantity
\begin{equation}\label{log-a}
[(\log t)^a/t]^{1/(m-1)}\ \ \ \forall t\ge2 \, ,
\end{equation}
where the constant $a$ is explicit and \it larger \rm than one. Thus, the resulting smoothing effect is given in terms of a slightly slower function of $t$, but the assumption is considerably weakened: in fact, we do not require that there exists a spectral gap for $-\Delta$ on the manifold under consideration. This result is contained in Theorem \ref{thm: sobo-deg}. We then construct simple explicit examples (in the class of the so-called \it model manifolds\rm, see e.g.\   \cite{Besse}, \cite{GW}, \cite{G}) in which the assumption of the theorem are satisfied, and the bounds of Theorem \ref{thm: sobo-deg} are \it sharp \rm in the sense that a matching lower bound holds for suitable solutions. This is shown by means of carefully constructed barriers, see Proposition \ref{thm: example}.

It is worth mentioning that similar bounds involving quantities like \eqref{log-a} have been obtained in \cite[Section 1.4]{A} for solutions to the homogeneous Dirichlet problem on Euclidean quasi-cylindrical domains. The techniques used there are completely different and rely on estimates of the growth of the support of the solution.

\smallskip
In Section \ref{sub} we concentrate on the case in which, besides the usual Sobolev inequality that we keep on assuming, an additional sub-Poincar\'e inequality holds. Namely, we require that there exists $p\in[1,2)$ such that
\begin{equation}\label{subsub}
\left\| f \right\|_{p;\nu} \le D \left\| \nabla f \right\|_{2;\mu} \quad \forall f \in C^\infty_c(M)
\end{equation}
holds for a suitable positive constant $D$, where the norms are taken w.r.t.\ suitable weights $\nu,\mu$, see  Section \ref{sub} for details. We show a surprising phenomenon, namely that under the stated assumptions the \it absolute bound \rm
\begin{equation}\label{abs}
\left\| u(t) \right\|_\infty \le B \, t^{-\frac{1}{m-1}} \quad \forall t>0
\end{equation}
holds for solutions to a weighted porous medium equation naturally related to the weights $\nu,\mu$. Such bound, when it holds, is clearly sharp, see Remark \ref{sharp}. As already mentioned, this is a bound which is typically known to hold in \it bounded \rm Euclidean domains; here we also show that there are examples of weights which make the weighted manifold considered of \it infinite volume\rm, but on which the assumptions hold and hence \eqref{abs} holds. To the best of our knowledge, an absolute bound in infinite-volume weighted manifolds first appeared in \cite{T} (see Theorem 1.3 there), where the author proves it in the special case of the Euclidean space with power-type weights, in the more general context of doubly nonlinear equations. In agreement with our results, one can easily verify that the weights considered there satisfy a sub-Poincar\'e inequality.

As a corollary, existence of a bounded, positive, minimal solution to a related weighted sublinear elliptic equation on manifolds is proved under the above assumptions.

Finally, we prove that the above sub-Poincar\'e inequality \eqref{subsub} \it cannot hold on Cartan-Hadamard manifolds\rm, when $\mu,\nu=1$, a purely functional analytic result which we believe has some independent interest, see Section \ref{sec-no-sub}.

\subsection{Existence, uniqueness and short-time estimates}
Existence of solutions to \eqref{eq-non-pesata}, which satisfy a suitable weak formulation, can be established by means of a rather standard approximation procedure. The main idea is to first solve the following homogeneous Dirichlet problem:
\begin{equation*}\label{eq-non-pesata-n}
\begin{cases}
u_t =  \Delta (u^m) & \textrm{in } B_n \times \mathbb{R}^{+} \, , \\
u=0 & \textrm{on } \partial B_n \times \mathbb{R}^{+} \, , \\
u = u_0 \in L^1(M) \cap L^\infty(M)  & \textrm{on } B_n \times \{ 0 \} \, ,
\end{cases}
\end{equation*}
where $B_n$ is a ball of radius $ n $, and then let $ n \to \infty $ using suitable energy estimates. This is performed in detail in \cite{GMP}, following a strategy outlined in \cite{V07}, in the case of \it weighted \rm porous medium equations in the Euclidean setting. The modifications required to cover the present situation are inessential. The same comment applies to the weighted porous medium equation dealt with in Section \ref{sub}. We also note that the assumption $u_0 \in L^\infty(M)$ can, likewise, be removed quite standardly.

Uniqueness is established e.g.\ in the class of weak \emph{energy} solutions corresponding to $ u_0 \in L^1(M)\cap L^{m+1}(M) $ (see again \cite{GMP}) or in the class of \emph{strong} solutions with $ u_0 \in L^1(M) $ (see \cite{V07}).

The smoothing effects proved in the present paper are relevant for large times only, although they hold for any $t>0$. In fact, we comment here that by a well-known result (see e.g.\  \cite{BG}) the validity of a Sobolev inequality of the type
\begin{equation*}\label{eq.Sob-fam-segnato2}
\| f \|_{2\sigma} \le C \, \|\nabla f \|_2 \quad \forall f\in C_c^\infty(M) \,
\end{equation*}
where $\sigma\in(1,d/(d-2)]$, implies that the following smoothing effect holds:
\begin{equation*}\label{eq.smoothing2}
\left\| u(t) \right\|_{\infty} \leq    K \frac{\|u_0\|_1^{\frac{\sigma-1}{\sigma m -1}}}{t^{\frac{\sigma}{\sigma m -1}}}\qquad\ \forall t>0,\ \ \forall u_0\in\,L^{1}(M)\, .
\end{equation*}
It is easy to see that the above bound, for any given $\|u_0\|_1$, is stronger than the ones proved in the present paper as $t\to0$ whatever the value of $\sigma$ is, whereas it is weaker as $t\to+\infty$.


\section{Poincar\'e inequalities}\label{SE}
We consider here a Cartan-Hadamard manifold for which $\min \Sigma_{{\rm L}^2}(-\Delta)=\Lambda>0$, where $\Sigma_{{\rm L}^2}(-\Delta)$ denotes the $L^2$ spectrum of $-\Delta$. This can be equivalently rewritten as
\begin{equation}\label{eq.Poi}
\sqrt \Lambda \, \|f\|_{2} \le  \|\nabla f \|_2 \quad \forall f \in C_c^\infty(M)
\end{equation}
for some $ \Lambda>0 $, namely a \emph{Poincar\'e} inequality. Note that, by the well-known results of \cite{M}, inequality \eqref{eq.Poi} holds if in addition the sectional curvatures are bounded above by a negative constant. On the other hand, on any Cartan-Hadamard manifold the Nash inequality
\begin{equation}\label{eq.Nash}
\| f \|_{2} \le C \, \| \nabla f \|_2^{ \frac{d}{d+2} } \| f \|_1^{\frac{2}{d+2}} \ \ \ \forall f \in C_c^\infty(M)
\end{equation}
holds (see e.g~\cite[Remark 14.6 and Lemma 14.7]{G-libro}) for some positive $ C=C(d) $. As a matter of fact, in dimension $ d \ge 3 $, the Nash inequality is actually equivalent to the Sobolev inequality
\begin{equation}\label{eq.Sob}
\|f\|_{2^*} \le C \, \|\nabla f\|_2 \ \ \ \forall f \in C_c^\infty(M) \, ,
\end{equation}
where $2^*:= 2d/(d-2)$ (we refer e.g~to \cite[Section 2.4]{Dav} or to \cite{BCLS}). Actually \eqref{eq.Nash} and \eqref{eq.Sob} are also equivalent to suitable Faber-Krahn inequalities, see \cite[Proposition 8.1]{H} or \cite[Lemma 14.7 and Exercise 14.3]{G-libro}, or to bounds on the volume growth of the level sets of the Green's function \cite[Theorem 8.2]{H} (such results basically rely on the papers \cite{HS,C,CL}).

We are now ready to state and prove the main result of this section.

\begin{thm}\label{teo: log-1}
Let $M$ be a Cartan-Hadamard manifold such that the Poincar\'e inequality \eqref{eq.Poi} holds. Then, for all solutions of the porous medium equation \eqref{eq-non-pesata}, the smoothing effect
\beq\label{eq.smoothgen}
\|u(t)\|_\infty\le H \left[\frac{\log\left(2+t\|u_0\|^{m-1}_1\right)}t\right]^{\frac1{m-1}} \quad \forall t>0
\eeq
holds, where $ H $ is a positive constant depending only on $ d $, $ \Lambda $ and $m$.

In particular, the statement is true on any Cartan-Hadamard manifold on which the condition
\[
\operatorname{sec}(x)\le-k<0
\]
holds for all $x\in M$ and a positive constant $k$, where $\operatorname{sec}$ denotes sectional curvature.
\end{thm}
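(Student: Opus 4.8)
The plan is to derive \eqref{eq.smoothgen} from a one-parameter \emph{family} of Sobolev inequalities, obtained by interpolating the endpoint Sobolev inequality \eqref{eq.Sob} against the spectral gap \eqref{eq.Poi}, and then to optimize the resulting family of smoothing estimates over the interpolation parameter. First I would fix $\sigma\in(1,d/(d-2)]$ and, viewing $2\sigma$ as an interpolation exponent between $2$ and $2^*=2d/(d-2)$, combine $\|f\|_{2^*}\le C\|\nabla f\|_2$ with $\|f\|_2\le\Lambda^{-1/2}\|\nabla f\|_2$ through H\"older to obtain
\begin{equation*}
\|f\|_{2\sigma}\le C_\sigma\,\|\nabla f\|_2\qquad\forall f\in C_c^\infty(M),
\end{equation*}
with $C_\sigma$ an explicit combination of $C$ and $\Lambda^{-1/2}$. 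The decisive point, and essentially the only place where \eqref{eq.Poi} is genuinely used, is that $C_\sigma$ stays \emph{bounded} as $\sigma\to1^+$ (in fact $C_\sigma\to\Lambda^{-1/2}$): it is precisely this uniform control near the lower endpoint that prevents any extra logarithmic factor from appearing, in contrast with the degenerate situation of Section \ref{deg}.

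Next I would feed each member of this family into the standard smoothing machinery recalled in the Introduction (cf.\ \cite{BG}): via Moser iteration applied to the energy identities for $\|u(t)\|_p^p$, written in terms of $v=u^{(p+m-1)/2}$ which the Sobolev inequality controls, one gets the implication
\begin{equation*}
\|f\|_{2\sigma}\le C_\sigma\|\nabla f\|_2\ \Longrightarrow\ \|u(t)\|_\infty\le K_\sigma\,\frac{\|u_0\|_1^{\frac{\sigma-1}{\sigma m-1}}}{t^{\frac{\sigma}{\sigma m-1}}}\qquad\forall t>0.
\end{equation*}
Here what matters is not the estimate itself but the \emph{dependence of $K_\sigma$ on $\sigma$}: since $C_\sigma$ is bounded, a careful bookkeeping of the iteration constants should show that $K_\sigma$ blows up only through the purely combinatorial factor, at the precise rate $K_\sigma\sim(\sigma-1)^{-1/(m-1)}$ as $\sigma\to1^+$. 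Extracting exactly this rate is the main obstacle, since a cruder bound would spoil the power of the logarithm; the boundedness of $C_\sigma$ furnished by \eqref{eq.Poi} is exactly what makes the clean exponent attainable (whereas a divergent $C_\sigma$ increases it, as in Theorem \ref{thm: sobo-deg}). All estimates are first carried out on the Dirichlet approximations on balls $B_n$, whose solutions inherit \eqref{eq.Poi} and \eqref{eq.Sob} because $C_c^\infty(B_n)\subset C_c^\infty(M)$, and then passed to the limit $n\to\infty$.

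Finally, by the scaling $u\mapsto\lambda\,u(\cdot,\lambda^{m-1}t)$, which preserves \eqref{eq-non-pesata} on \emph{any} manifold as it acts only on amplitude and time, I may normalize $\|u_0\|_1=1$; this is exactly why $t$ and $\|u_0\|_1$ will enter only through $t\|u_0\|_1^{m-1}$. It then remains to optimize $\|u(t)\|_\infty\le K_\sigma\,t^{-\sigma/(\sigma m-1)}$ over $\sigma$. Using $\tfrac{\sigma}{\sigma m-1}\approx\tfrac1{m-1}-\tfrac{\sigma-1}{(m-1)^2}$ near $\sigma=1$ together with the rate of the previous step, and setting $L:=\log(2+t)$, one obtains
\begin{equation*}
\|u(t)\|_\infty\;\lesssim\;t^{-\frac1{m-1}}\,(\sigma-1)^{-\frac1{m-1}}\,\exp\!\left(\frac{(\sigma-1)\,L}{(m-1)^2}\right),
\end{equation*}
and minimizing in $\sigma$ yields the choice $\sigma-1\approx(m-1)/L$, admissible once $L$ is large (for small $t$ one simply keeps $\sigma=d/(d-2)$, which accounts for the shift by $2$ in the logarithm), hence $\|u(t)\|_\infty\lesssim(L/t)^{1/(m-1)}$. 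Undoing the scaling restores $\|u_0\|_1$ and turns this into \eqref{eq.smoothgen}. The concluding geometric statement then follows, since under $\operatorname{sec}\le-k<0$ the inequality \eqref{eq.Sob} holds on every Cartan-Hadamard manifold and \eqref{eq.Poi} holds by \cite{M}.
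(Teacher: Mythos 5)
Your overall strategy — interpolate \eqref{eq.Sob} against \eqref{eq.Poi} to get a family $\|f\|_{2\sigma}\le C_\sigma\|\nabla f\|_2$ with $C_\sigma$ bounded as $\sigma\to1^+$, feed each member into an $L^1\to L^\infty$ smoothing with constant $K_\sigma$, and optimize $\sigma-1\approx(m-1)/\log t$ — is genuinely different from the paper's and is viable in principle; your expansion of $\sigma/(\sigma m-1)$ and the final optimization calculus are correct. But the proof has a genuine gap at exactly the point you flag as "the main obstacle": the claim $K_\sigma\sim(\sigma-1)^{-1/(m-1)}$ is asserted, not proved, and it cannot be treated as bookkeeping. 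It is not quotable from \cite{BG}, where $\sigma$ is fixed and the dependence of $K$ on $\sigma$ near $1$ is never tracked. Worse, the claim is essentially equivalent to the theorem itself: since $\sup_{t>0}\,[\log(2+t)]^{1/(m-1)}\,t^{-(\sigma-1)/[(m-1)(\sigma m-1)]}\asymp(\sigma-1)^{-1/(m-1)}$, the bound \eqref{eq.smoothgen} and the family $\|u(t)\|_\infty\le K_\sigma t^{-\sigma/(\sigma m-1)}$ with that rate are interchangeable, so asserting the rate comes close to assuming the conclusion. Moreover, "careful bookkeeping" of the standard iteration does \emph{not} produce it. With the usual dyadic-in-time Moser scheme, the accumulated splitting factor is $\prod_k (2^{k+1})^{w_k}$ with weights $w_k\propto\sigma^{-k}$ whose first moment is of order $(\sigma-1)^{-1}$, i.e.\ a factor $e^{c/(\sigma-1)}$; optimizing then yields only $t^{-1/(m-1)}e^{c'\sqrt{\log t}}$, far weaker than \eqref{eq.smoothgen}. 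Even with geometric time steps of ratio $\approx 1/\sigma$, if one runs the iteration from $L^1$ with one-step estimates whose constants grow like the exponents $r_k\approx\sigma^k(m-1)/(\sigma-1)$, a second divergent factor appears and one lands on $(\sigma-1)^{-2/(m-1)}$, hence $[(\log t)^2/t]^{1/(m-1)}$ — precisely the weaker bound that the paper's remark on \cite{AT} singles out as inferior. To actually reach $(\sigma-1)^{-1/(m-1)}$ one must combine: time intervals in geometric progression adapted to $\sigma$; one-step estimates kept bounded along the iteration (using monotonicity of $t\mapsto\|u(t)\|_p$ rather than, say, B\'enilan--Crandall); and an iteration started at a fixed exponent $r_0>1$ bounded away from $1$, with $L^1$ data recovered by a separate bootstrap whose exponent $1/(1-\eta)$ stays bounded as $\sigma\to1$. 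None of this appears in your proposal, and it is the entire content of the theorem.

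It is instructive that the paper's proof is structured precisely to avoid this trap. There, \eqref{eq.Poi} is used once, directly in the differential identity for $\|u(t)\|_{q+1}^{q+1}$, producing the datum-independent decay \eqref{eq.abs} whose constant is completely explicit in $q$ and grows only like $q^{1/(m-1)}$; the Moser-type smoothing \eqref{eq.smoothing} is then invoked only at the \emph{fixed} exponent $\sigma=d/(d-2)$, where the only uniformity needed (in $q$, not in $\sigma$) is standard; finally one sets $q=\log(t+2)$. In other words, the paper optimizes over the intermediate Lebesgue exponent with the Sobolev exponent frozen, whereas you optimize over the Sobolev exponent from $L^1$ directly — which forces you to control the iteration constant in a regime the literature does not address. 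Two smaller points: your argument as written requires $d\ge3$ (there is no $2^*$ for $d=2$; the paper treats $d=2$ separately via \eqref{eq.smoothing-2d}), and the scaling/normalization step and the small-$t$ matching with $\sigma=d/(d-2)$ are fine as you describe them.
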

\begin{proof}
Let us first suppose $ d \ge 3 $ and assume with no loss of generality that $u_0\not\equiv0$, $u_0\in L^\infty(M)$, such assumption being removable by standards methods after \eqref{eq.smoothgen} has been proved under such hypothesis. By using the methods given e.g.\  in \cite{BG}, \cite{GM}, it is well known that
\begin{equation}\label{eq.smoothing}
\left\| u(t) \right\|_{\infty} \leq  K t^{-\frac{\sigma}{(\sigma-1)(q+1)+\sigma(m-1)}} \left\|  u_0 \right\|_{q+1}^{\frac{(\sigma-1)(q+1)}{(\sigma-1)(q+1)+\sigma(m-1)}}  \ \ \ \forall t>0 \, , \ \forall q\ge0 \, , \ \forall u_0 \in L^{q+1}(M) \, ,
\end{equation}
where $\sigma:=d/(d-2)$. It will important to note that $K$ depends only on $d, m$ and on the constant $C$ appearing in \eqref{eq.Sob}, but can be taken to be independent of $q$.

We now make use of the Poincar\'e-type inequality \eqref{eq.Poi} to give some further bounds on $\|u(t)\|_q$ for $q<+\infty$. We proceed as follows, denoting by ${\rm d}x$ the Riemannian measure:
\begin{equation}\label{eq.deriv}
\begin{aligned}
\frac{{\rm d}}{{\rm d}t}\|u(t)\|_{q+1}^{q+1}&=(q+1)\int_M u^q(t)\Delta u^m(t)\,{\rm d}x \\ &= -q(q+1)m\int_M u^{q+m-2}(t)|\nabla u(t)|^2\,{\rm d}x\\
&=-\frac{4q(q+1)m}{(m+q)^2}\int_M |\nabla u^{\frac{q+m}2}(t)|^2\,{\rm d}x\\
&\le -\frac{4q(q+1)m\,\Lambda}{(m+q)^2}\int_M u^{q+m}(t)\,{\rm d}x,
\end{aligned}
\end{equation}
where $\Lambda$ is the constant appearing in \eqref{eq.Poi}. We now use the interpolation inequality
\[
\|f\|_{q+1}\le\|f\|_{q+m}^\vartheta\|f\|_1^{1-\vartheta},
\]
where
\[\vartheta=\frac{q(q+m)}{(q+1)(q+m-1)}\in(0,1)\]
since $m>1$. Rewrite this as
\[
\|f\|_{q+m}^{q+m}\ge\frac{\|f\|_{q+1}^{\frac{q+m}{\vartheta}}}{\|f\|_1^{\left(\frac1\vartheta-1\right)(q+m)}}
\]
and use it in \eqref{eq.deriv} to get
\begin{equation*}\label{eq.deriv2}
\begin{aligned}
\frac{{\rm d}}{{\rm d}t}\|u(t)\|_{q+1}^{q+1} &\le -\frac{4q(q+1)m\,\Lambda}{(m+q)^2}\int_M u^{q+m}(t)\,{\rm d}x\\ &\le -\frac{4q(q+1)m\,\Lambda}{(m+q)^2}\frac{\|u(t)\|_{q+1}^{\frac{q+m}{\vartheta}}}{\|u(t)\|_1^{\left(\frac1\vartheta-1\right)(q+m)}}\\
&\le-\frac{4q(q+1)m\,\Lambda}{(m+q)^2}\frac{\|u(t)\|_{q+1}^{\frac{q+m}{\vartheta}}}{\|u_0\|_1^{\left(\frac1\vartheta-1\right)(q+m)}} \, ,
\end{aligned}
\end{equation*}
where in the last step we have used non-expansivity of the $L^1$ norm. Hence, for initial data $u_0$ such that $\|u_0\|_1=1$ we have shown that
\[
\frac{{\rm d}}{{\rm d}t}\|u(t)\|_{q+1}^{q+1}\le -\frac{4q(q+1)m\,\Lambda}{(m+q)^2}\left(\|u(t)\|_{q+1}^{q+1}\right)^\lambda,
\]
where $\lambda=1+(m-1)/q>1$. Integrating in time we get, setting $y(t):=\|u(t)\|_{q+1}^{q+1}$,
\[
\frac1{y(t)^{\lambda-1}}\ge\frac1{y(0)^{\lambda-1}}+\frac{4q(q+1)m(\lambda-1)\,\Lambda}{(m+q)^2}t>\frac{4(q+1)m(m-1)\,\Lambda}{(m+q)^2}t,
\]
so that
\begin{equation}\label{eq.abs}
\|u(t)\|_{q+1}\le \left(\frac{(m+q)^2}{4(q+1)m(m-1)\,\Lambda}\right)^{\frac q{(m-1)(q+1)}}\frac1{t^{\frac q{(m-1)(q+1)}}}.
\end{equation}
Combining \eqref{eq.smoothing}, written in the time interval $[t/2,t]$ and \eqref{eq.abs}, written in the time integral $[0,t/2]$, yields:
\begin{equation*}
\begin{aligned}
\|u(t)\|_\infty\le & K \left(\frac2t\right)^{\frac{\sigma}{(\sigma-1)(q+1)+\sigma(m-1)}} \left\|  u\left(t/2\right) \right\|_{q+1}^{\frac{(\sigma-1)(q+1)}{(\sigma-1)(q+1)+\sigma(m-1)}}\\
\le & K \left(\frac2t\right)^{\frac{\sigma}{(\sigma-1)(q+1)+\sigma(m-1)}}\left(\frac{(m+q)^2}{4(q+1)m(m-1)\,\Lambda}\right)^{ \frac{q(\sigma-1)}{(m-1)[(\sigma-1)(q+1)+\sigma(m-1)]}} \\
& \times \left(\frac2t\right)^{ \frac{q(\sigma-1)}{(m-1)[(\sigma-1)(q+1)+\sigma(m-1)]}}\\
= & K\left(\frac2t\right)^{ \frac{\sigma(m-1)+q(\sigma-1)}{(m-1)[(\sigma-1)(q+1)+\sigma(m-1)]}}\left(\frac{(m+q)^2}{4(q+1)m(m-1)\,\Lambda}\right)^{ \frac{q(\sigma-1)}{(m-1)[(\sigma-1)(q+1)+\sigma(m-1)]}}\\
= & K\frac{2^{\frac{\sigma(m-1)+q(\sigma-1)}{(m-1)[(\sigma-1)(q+1)+\sigma(m-1)]}}}{t^{\frac1{m-1}-\frac{\sigma-1}{(m-1)[(\sigma-1)(q+1)+\sigma(m-1)]}}}
\left(\frac{(m+q)^2}{4(q+1)m(m-1)\,\Lambda}\right)^{\frac1{m-1}- \frac{\sigma-1+\sigma(m-1)}{(m-1)[(\sigma-1)(q+1)+\sigma(m-1)]}}\\
= & K \left(\frac{(m+q)^2}{4(q+1)m(m-1)\,\Lambda\,t}\right)^{\frac1{m-1} }t^{\frac{\sigma-1}{(m-1)[(\sigma-1)(q+1)+\sigma(m-1)]}}\\
& \times 2^{\frac{\sigma(m-1)+q(\sigma-1)}{(m-1)[(\sigma-1)(q+1)+\sigma(m-1)]}}\left(\frac{4(q+1)m(m-1)\,\Lambda}{(m+q)^2}
\right)^{\frac{\sigma-1+\sigma(m-1)}{(m-1)[(\sigma-1)(q+1)+\sigma(m-1)]}}\\
\le & K'\left(\frac{(m+q)^2}{4(q+1)m(m-1)\,\Lambda\,t}\right)^{\frac1{m-1} }t^{\frac{\sigma-1}{(m-1)[(\sigma-1)(q+1)+\sigma(m-1)]}},
\end{aligned}
\end{equation*}
where in the last step we have noticed that the quantity
\[
2^{\frac{\sigma(m-1)+q(\sigma-1)}{(m-1)[(\sigma-1)(q+1)+\sigma(m-1)]}}\left(\frac{4(q+1)m(m-1)\,\Lambda}{(m+q)^2}
\right)^{\frac{\sigma-1+\sigma(m-1)}{(m-1)[(\sigma-1)(q+1)+\sigma(m-1)]}}
\]
is uniformly bounded as a function of $q\ge0$, and we have denoted by $K'$ another numerical constant which, as $K$ does, depends only on $d, m$, on the constant $C$ appearing in \eqref{eq.Sob} and on the constant $\Lambda$ appearing in \eqref{eq.Poi}, but can be taken to be independent of $q$.

Hence we have shown that
\begin{equation}\label{eq.q}
\|u(t)\|_\infty\le K'\left(\frac{(m+q)^2}{4(q+1)m(m-1)\,\Lambda\,t}\right)^{\frac1{m-1} }t^{\frac{\sigma-1}{(m-1)[(\sigma-1)(q+1)+\sigma(m-1)]}}.
\end{equation}

Choose now, for $t\ge0$, $q=\log(t+2)$. Making this choice in \eqref{eq.q} yields
\[
\|u(t)\|_\infty\le H\left[\frac{\log(t+2)}t\right]^{\frac1{m-1}},
\]
for all $u_0$ such that $\|u_0\|_1=1$, where we have noticed that $t^{1/\log t}=e$ and where $H$ is a constant depending again on $d,m,C,\Lambda$. This is the stated bound if $\|u_0\|_1=1$. The claim follows by a standard scaling argument in the time variable.

Finally, if $ d = 2 $, the smoothing effect
\begin{equation}\label{eq.smoothing-2d}
\left\| u(t) \right\|_{\infty} \leq  K \, t^{-\frac{1}{q+m}} \left\|  u_0 \right\|_{q+1}^{\frac{q+1}{q+m}} \ \ \ \forall t>0 \, , \ \forall q\ge0 \, , \ \forall u_0 \in L^{q+1}(M) \, ,
\end{equation}
can be shown to hold (one can use the techniques e.g.\  of \cite{GM3}). The proof of \eqref{eq.smoothgen} then follows as above, just by using \eqref{eq.smoothing-2d} instead of \eqref{eq.smoothing}.
\end{proof}
\begin{rem}\label{oss: hyp}\rm
The bound given in the previous theorem is sharp. In fact it coincides with the smoothing effect recently proved, in the case of $M={\mathbb H}^d$ (the $d$-dimensional hyperbolic space, in which sectional curvatures are everywhere equal to -1), in \cite{V}, mainly for radial data. In \cite{V}, the result is proved by delicate barrier arguments, specific for the case of ${\mathbb H}^d$, and is shown to be sharp. Our contribution here is to link the smoothing effect to the validity of suitable functional inequalities only and, hence, to curvature assumptions.
\end{rem}

\begin{rem} \rm The recent paper \cite{AT} proves new interesting bounds for solutions to the doubly nonlinear equation on Riemannian manifolds, which are sharp in some cases. See also \cite{AT2, AT3} for similar results in the case of unbounded Euclidean domains.

The authors use an approach based on Faber-Krahn inequalities, which is known to be fruitful in the linear case, see e.g.\ \cite{G2, G3}. In such paper the key assumption concerns a suitable control of $\Lambda(v)$, the bottom of the spectrum of the Laplacian with additional Dirichlet boundary conditions on the boundary of a subset $A$, in terms of the volume $v$ of $A$, see hypothesis H1 of that paper. It is easily shown that, however, their assumption H1 does not hold in ${\mathbb H}^d$, although it might be possible that their method of proof can be adapted to describe the case of negative curvature as well. We also comment that, even if the result given in Remark 1.4 of \cite{AT} should hold, it would yield a bound of the form
\begin{equation*}\label{eq: smoothing-hyp2}
\|u(t)\|_\infty\le C\left[\frac{(\log t)^2}t\right]^{\frac1{m-1}}\ \ \ \forall t\ge2\, ,
\end{equation*}
when $\|u_0\|_1=1$, which is weaker than the one proved in \cite{V}, and here in the general case. Similar comments apply to the examples given in the next section.
\end{rem}

\section{Sobolev-type inequalities diverging as $ \sigma \downarrow 1 $}\label{deg}

Suppose that the Sobolev-type inequality
\begin{equation*}\label{eq.Sob-fam-segnato}
\| f \|_{2\overline{\sigma}} \le C \, \|\nabla f \|_2 \quad \forall f\in C_c^{\infty}(M) \,
\end{equation*}
holds for a suitable $\overline{\sigma}>1$. Assume in addition that the Poincar\'e inequality \eqref{eq.Poi} holds. Then for any $ \sigma \in (1,\overline{\sigma}) $ the inequality
\begin{equation}\label{eq.Sob-fam}
\| f \|_{2\sigma} \le C_\sigma \, \|\nabla f \|_2 \quad \forall f\in C_c^{\infty}(M) \, , \ \forall \sigma \in (1,\overline{\sigma}] \, ,
\end{equation}
holds as well, where the positive constant $ C_\sigma $ depends on $ \Lambda $, $C$ and can actually taken to be independent of $ \sigma \in (1,\overline{\sigma}) $. The aim of this section is to provide asymptotic bounds for the solutions to \eqref{eq-non-pesata} when the family \eqref{eq.Sob-fam} holds on $M$ with a constant $ C_\sigma $ that may blow up as $ \sigma \downarrow 1 $, with a suitable rate. We state below the result of the present section. Later on we shall concentrate on specific examples.
\begin{thm}\label{thm: sobo-deg}
Suppose that the family of Sobolev-type inequalities \eqref{eq.Sob-fam} holds, with
\begin{equation}\label{eq: cond-C}
C_\sigma \le \frac{\widehat{C}}{(\sigma-1)^\gamma} \quad \forall \sigma \in (1,\overline{\sigma}]
\end{equation}
for some $ \gamma>0 $ and $ \widehat{C}>0 $. Then there exists a constant $ Q>0 $, depending only on $ \overline{\sigma} $, $ C_{\overline{\sigma}} $, $ \gamma$, $ \widehat{C} $ and $m$, such that any solution $u$ to \eqref{eq-non-pesata} corresponding to an initial datum $u_0\in L^1(M)$ satisfies the smoothing estimate
\begin{equation}\label{eq: log-alfa}
\left\| u(t) \right\|_\infty \le Q \left[ \log\left(t \, \| u_0 \|_1^{m-1} + e \right) \right]^{\frac{1+2\gamma}{m-1}}  t^{-\frac{1}{m-1}}  \quad \forall t>0 \, .
\end{equation}
\end{thm}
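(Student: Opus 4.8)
The plan is to run the same two-interval scheme as in the proof of Theorem \ref{teo: log-1}, but with the Poincar\'e-based decay \eqref{eq.abs} replaced by a Sobolev-based decay at a \emph{variable} level $\sigma\in(1,\overline{\sigma}]$, and with an extra optimization over $\sigma$ stacked on top of the optimization over $q$. First I would reduce to $\|u_0\|_1=1$: setting $v(x,s):=\lambda^{-1/(m-1)}u(x,s/\lambda)$ with $\lambda:=\|u_0\|_1^{m-1}$ produces a unit-mass solution, and the normalized estimate transforms into \eqref{eq: log-alfa}, the factor $\|u_0\|_1^{m-1}$ entering inside the logarithm exactly as stated; as usual $u_0\in L^\infty(M)$ may be assumed and removed afterwards by approximation. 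I would also dispose of small $t$ at once: the fixed-level smoothing gives $\|u(t)\|_\infty\lesssim t^{-\overline{\sigma}/(\overline{\sigma} m-1)}$, and since $\overline{\sigma}/(\overline{\sigma} m-1)<1/(m-1)$ this is \emph{stronger} than \eqref{eq: log-alfa} as $t\to0^+$ (where its right-hand side is comparable to $t^{-1/(m-1)}$). So only the large-$t$ regime needs the argument below.

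For the decay on $[0,t/2]$ I would repeat the computation \eqref{eq.deriv}, but apply \eqref{eq.Sob-fam} at level $\sigma$ to $u^{(q+m)/2}$ in place of \eqref{eq.Poi}, obtaining
\[
\frac{\rd}{{\rm d}t}\|u(t)\|_{q+1}^{q+1}\le-\frac{4q(q+1)m}{(m+q)^2\,C_\sigma^2}\,\|u(t)\|_{\sigma(q+m)}^{q+m}.
\]
Interpolating $\|u\|_{q+1}\le\|u\|_{\sigma(q+m)}^\theta\|u\|_1^{1-\theta}$ (legitimate for every $q>0$, since then $1<q+1<\sigma(q+m)$) and using $\|u(t)\|_1\le1$ closes this into a superlinear inequality $y'\le-c\,y^{\Lambda_q}$ for $y:=\|u\|_{q+1}^{q+1}$, with $\Lambda_q-1=(\sigma m-1)/(\sigma q)>0$. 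Integrating exactly as in Theorem \ref{teo: log-1} yields the level-$\sigma$ analogue of \eqref{eq.abs},
\[
\|u(t)\|_{q+1}\le\left(\frac{\sigma\,(q+m)^2\,C_\sigma^2}{4(q+1)m(\sigma m-1)\,t}\right)^{\frac{\sigma q}{(q+1)(\sigma m-1)}}.
\]

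The second ingredient is the $L^{q+1}\to L^\infty$ smoothing effect coming, via the methods of \cite{BG}, \cite{GM}, from the \emph{single} Sobolev inequality \eqref{eq.Sob-fam} at the \emph{fixed} level $\overline{\sigma}$: this is precisely \eqref{eq.smoothing} with $(\sigma,C)$ replaced by $(\overline{\sigma},C_{\overline{\sigma}})$, its constant $K$ being independent of $q$. Keeping the level fixed here is what prevents the smoothing constant from blowing up, and is the reason the variable level $\sigma\downarrow1$ is confined to the decay step. Combining the two estimates on $[t/2,t]$ and $[0,t/2]$ and inserting \eqref{eq: cond-C} leaves a bound of the schematic form (suppressing bounded multiplicative factors)
\[
\|u(t)\|_\infty\lesssim\left(q\,(\sigma-1)^{-2\gamma}\right)^{\frac1{m-1}}\,t^{\frac{\sigma-1}{(m-1)^2}+\frac{1}{q(m-1)}}\,t^{-\frac1{m-1}},
\]
where the surviving positive power of $t$ records the gap between the level-$\sigma$ rate $\sigma/(\sigma m-1)$ and the absolute rate $1/(m-1)$. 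This objective factors into a $q$-part and a $(\sigma-1)$-part: minimizing $q^{1/(m-1)}\,t^{1/(q(m-1))}$ at $q\sim\log t$ yields the factor $(\log t)^{1/(m-1)}$ (the same logarithm already present in Theorem \ref{teo: log-1}), while minimizing $(\sigma-1)^{-2\gamma/(m-1)}\,t^{(\sigma-1)/(m-1)^2}$ at $\sigma-1\sim1/\log t$ yields the additional $(\log t)^{2\gamma/(m-1)}$; their product gives $(\log t)^{(1+2\gamma)/(m-1)}\,t^{-1/(m-1)}$, which after undoing the scaling is \eqref{eq: log-alfa}.

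The main obstacle is the bookkeeping in this last step: I must verify, as in Theorem \ref{teo: log-1}, that along the path $q\sim\log t$, $\sigma-1\sim1/\log t$ all the auxiliary factors (powers of $2$, the bracketed quantities, and the mild deviation of the exponents from their $q\to\infty$ limits $1$ and $1/(m-1)$) stay uniformly bounded, and that the two one-variable optimizations genuinely \emph{decouple}, so that the exponents $1/(m-1)$ and $2\gamma/(m-1)$ simply add. It remains to check that $\sigma-1\sim1/\log t$ lies in $(0,\overline{\sigma}-1]$ for all large $t$ (clear) and to confirm $\theta\in(0,1)$ and $\Lambda_q>1$, both of which follow from $m,\sigma>1$.
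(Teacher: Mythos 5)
Your proposal is correct and follows essentially the same route as the paper's proof: the variable-level $\sigma$ decay estimate on $[0,t/2]$ (the analogue of \eqref{eq.abs}, identical to the paper's \eqref{eq:log-gamma-4}), the fixed-level $\overline{\sigma}$ smoothing on $[t/2,t]$ with $q$-independent constant, and the simultaneous choices $q\sim\log t$, $\sigma-1\sim(\overline{\sigma}-1)/\log t$, followed by the time-scaling reduction to $\|u_0\|_1=1$. The only cosmetic difference is that the paper takes $q=\log(t+e)$ and $\sigma=1+(\overline{\sigma}-1)/\log(t+e)$ so that the argument covers all $t>0$ at once, rather than treating small $t$ separately as you do.
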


\begin{rem}\rm
If $\gamma=0$ we recover the result of the previous section, whose setting is however geometrically clearer and for which we then preferred to give an independent and simpler proof.
\end{rem}
\begin{proof}
Let $ q >0  $ and $ \sigma \in (1,\overline{\sigma} ]$. With no loss of generality, we shall suppose that $ u_0 \in L^1(M) \cap L^\infty(M) $. Proceeding as in the proof of Theorem \ref{teo: log-1} and exploiting \eqref{eq.Sob-fam}, we get:
\begin{equation}\label{eq:log-gamma-1}
\frac{{\rm d}}{{\rm d}t}\|u(t)\|_{q+1}^{q+1} \le -\frac{4q(q+1)m}{(m+q)^2\,C_\sigma^2} \, \|u(t)\|_{\sigma(q+m)}^{q+m} \, .
\end{equation}
By interpolation and using the non-expansivity of the $ L^1 $ norm, we infer that
\begin{equation}\label{eq:log-gamma-2}
\| u(t) \|_{q+1} \le \| u(t) \|_{\sigma(q+m)}^{\frac{\sigma(q+m)q}{[\sigma(q+m)-1](q+1)}} \, \| u_0 \|_{1}^{\frac{\sigma(q+m)-(q+1)}{[\sigma(q+m)-1](q+1)}} \quad \forall t>0 \, .
\end{equation}
Assuming for the sake of notational simplicity that $ \| u_0 \|_1=1 $, from \eqref{eq:log-gamma-1} and \eqref{eq:log-gamma-2} it follows
\begin{equation}\label{eq:log-gamma-3}
\frac{{\rm d}}{{\rm d}t}\|u(t)\|_{q+1}^{q+1} \le -\frac{4q(q+1)m}{(m+q)^2\,C_\sigma^2} \, \|u(t)\|_{q+1}^{(q+1)\frac{\sigma(q+m)-1}{\sigma q}} \, .
\end{equation}
Integrating \eqref{eq:log-gamma-3} and setting $ y(t):=\| u(t) \|_{q+1}^{q+1} $ we obtain
$$ y(t)^{\frac{\sigma m -1}{\sigma q}} \le \frac{1}{\frac{1}{y(0)^{\frac{\sigma m -1}{\sigma q}}} + \frac{4m(q+1)(\sigma m -1)}{\sigma (q+m)^2 C_\sigma^2 } \, t } \quad \forall t>0 \, , $$
whence
\begin{equation}\label{eq:log-gamma-4}
\| u(t) \|_{q+1} \le \left[ \frac{\sigma (q+m)^2 C_\sigma^2 }{4m(q+1)(\sigma m -1)} \right]^{\frac{\sigma q}{(q+1)(\sigma m -1)}} t^{-\frac{\sigma q}{(q+1)(\sigma m -1)}} \quad \forall t>0 \, .
\end{equation}
Now we remark that the validity of \eqref{eq.Sob-fam} for $ \sigma=\overline{\sigma} $ implies the smoothing estimate
\begin{equation}\label{eq:log-gamma-5}
\left\| u(t) \right\|_{\infty} \leq  K \, t^{-\frac{\overline{\sigma}}{(\overline{\sigma}-1)(q+1)+\overline{\sigma}(m-1)}} \left\|  u_0 \right\|_{q+1}^{\frac{(\overline{\sigma}-1)(q+1)}{(\overline{\sigma}-1)(q+1)+\overline{\sigma}(m-1)}} \quad \forall t>0
\end{equation}
for some positive $K=K(\overline{\sigma},C_{\overline{\sigma}},m)$, this following e.g.\ from the results of \cite{GM}. Hence by combining \eqref{eq:log-gamma-4} (at time $t/2$) and \eqref{eq:log-gamma-5} (with the time origin shifted to $t/2$) we end up with
\begin{equation}\label{eq:log-gamma-6}\begin{aligned}
\left\| u(t) \right\|_{\infty} \leq  A &\left[ \frac{\sigma (q+m)^2 C_\sigma^2 }{4m(q+1)(\sigma m -1)} \right]^{\frac{\sigma q (\overline{\sigma}-1)}{ (\sigma m -1)[(\overline{\sigma}-1)(q+1)+\overline{\sigma}(m-1) ] }}\\ &\times t^{-\frac{\overline{\sigma}(\sigma m -1)+\sigma q (\overline{\sigma}-1)}{ (\sigma m -1)[(\overline{\sigma}-1)(q+1)+\overline{\sigma}(m-1) ] }}  \quad \forall t>0 \, ,\end{aligned}
\end{equation}
where $ A $ is a suitable positive constant that can be taken to depend only on $ \overline{\sigma} $, $C_{\overline{\sigma}}$ and $m$. At this point, as in the proof of Theorem \ref{teo: log-1}, we can let $ q=\log(t+e) $ in \eqref{eq:log-gamma-6} to get
\begin{equation}\label{eq:log-gamma-7}
\begin{aligned}
\left\| u(t) \right\|_{\infty} \leq & A \, \left\{ \frac{\sigma [\log(t+e)+m]^2}{4m[\log(t+e)+1](\sigma m -1)} \right\}^{-\frac{\sigma(\overline{\sigma}m-1)}{(\sigma m-1)\{ (\overline{\sigma}-1)[\log(t+e)+1] + \overline{\sigma}(m-1) \}}}  \\
& \times t^{\frac{\overline{\sigma}-\sigma}{(\sigma m-1)\{(\overline{\sigma}-1)[\log(t+e)+1]+\overline{\sigma}(m-1)\}}} \left\{ \frac{\sigma [1+m/\log(t+e)]^2}{4m[1+1/\log(t+e)](\sigma m -1)} \right\}^{\frac{\sigma}{\sigma m-1}} \\
& \times C_\sigma^{-\frac{2\sigma(\overline{\sigma}m-1)}{(\sigma m-1)\{ (\overline{\sigma}-1)[\log(t+e)+1] + \overline{\sigma}(m-1) \}}} \left[ \log(t+e) \, C_\sigma^2 \right]^{\frac{\sigma}{\sigma m-1}} t^{-\frac{\sigma}{\sigma m-1}} \quad \forall t>0
\end{aligned}
\end{equation}
Since $ \sigma \in (1,\overline{\sigma}) $, it is apparent that the first two factors in the r.h.s.~of \eqref{eq:log-gamma-7} can be bounded from above by another positive constant $ A^\prime $ that depends only on $ \overline{\sigma} $, $C_{\overline{\sigma}}$ and $m$, so that \eqref{eq:log-gamma-7} reads
\begin{equation*}\label{eq:log-gamma-8}
\left\| u(t) \right\|_{\infty} \leq  A^\prime \, C_\sigma^{-\frac{2\sigma(\overline{\sigma}m-1)}{(\sigma m-1)\{ (\overline{\sigma}-1)[\log(t+e)+1] + \overline{\sigma}(m-1) \}}} \left[ \log(t+e) \, C_\sigma^2 \right]^{\frac{\sigma}{\sigma m-1}} t^{-\frac{\sigma}{\sigma m-1}} \quad \forall t>0 \, ,
\end{equation*}
which implies, in view of \eqref{eq: cond-C},
\begin{equation*}\label{eq:log-gamma-9}\begin{aligned}
\left\| u(t) \right\|_{\infty} \leq  &A^{\prime\prime} \, (\sigma-1)^{\frac{2\gamma \sigma(\overline{\sigma}m-1)}{(\sigma m-1)\{ (\overline{\sigma}-1)[\log(t+e)+1] + \overline{\sigma}(m-1) \}}}\\ &\times\left[ \log(t+e) \left(\sigma-1\right)^{-2\gamma} \right]^{\frac{\sigma}{\sigma m-1}} t^{-\frac{\sigma}{\sigma m-1}} \quad \forall t>0 \, ,\end{aligned}
\end{equation*}
where $ A^{\prime\prime} $ is a suitable positive constant depending on the same parameters on which $ A^\prime $ depends and on $ \widehat{C} $. We define
$$ \sigma = 1 + \frac{\overline{\sigma}-1}{\log(t+e)} \, , $$
whence, up to another positive constant $ A^{\prime\prime\prime}=A^{\prime\prime\prime}(\overline{\sigma}, C_{\overline{\sigma}}, \gamma, \widehat{C} , m) $,
\begin{equation*}\label{eq:log-gamma-10}\begin{aligned}
\left\| u(t) \right\|_{\infty} \leq  &A^{\prime\prime\prime} \left[ \log(t+e) \right]^{\frac{1+2\gamma}{m-1} - \frac{(1+2\gamma)(\overline{\sigma}-1)}{[(m-1)\log(t+e)+m(\overline{\sigma}-1) ](m-1)} }\\ &\times t^{-\frac{1}{m-1} + \frac{\overline{\sigma}-1}{[(m-1)\log(t+e)+m(\overline{\sigma}-1) ](m-1)}} \quad \forall t>0 \, ,
\end{aligned}\end{equation*}
that is
\begin{equation*}\label{eq:log-gamma-11}
\left\| u(t) \right\|_{\infty} \leq  Q \left[ \log(t+e) \right]^{\frac{1+2\gamma}{m-1}} t^{-\frac{1}{m-1} } \quad \forall t>0 \, ,
\end{equation*}
where $ Q $ is the final constant of the statement. The validity of \eqref{eq: log-alfa} follows by scaling, as in the end of the proof of Theorem \ref{teo: log-1}.
\end{proof}

\subsection{Examples}\label{exa-3}
We shall discuss here some particular manifolds on which the assumptions of Theorem \ref{thm: sobo-deg} hold, and on which the bound proved in such theorem can be shown to be sharp.

The manifolds we have in mind are particular \it model manifolds\rm, namely they are topologically ${\mathbb R}^d$ with their metric being given by
\begin{equation*}\label{eq-modello}
{\rm d}s^2={\rm d}r^2+\psi(r)^2{\rm d}\Theta^2,
\end{equation*}
where $r$ denotes geodesic distance from a given \it pole \rm $o$, ${\rm d}\Theta^2$ denotes the canonical metric on the unit sphere ${\mathbb S}^{d-1}$ and $\psi:[0,+\infty)\longrightarrow[0,+\infty)$ is a $C^2$ function with $\psi(0)=\psi_+''(0)=0$ and $\psi'_+(0)=1$, where the subscript ``+'' denotes right derivative. Note that these conditions imply that the metric is $C^2$ and can be extended across $o$. We also assume throughout that $\psi'(r)>0$ for all $r>0$.

In the above coordinates, we have that
\begin{equation*}\label{grad-mod}\begin{aligned}
\nabla f (r,\phi_1, \ldots , \phi_{d-1}) &=  \frac{\partial f}{\partial r}(r,\phi_1,\ldots,\phi_{d-1}) \, \mathbf{e}_r\\ &+ \frac{1}{\psi(r)} \sum_{n=1}^{d-1} \frac{\partial f}{\partial \phi_n}(r,\phi_1,\ldots,\phi_{d-1}) \,  \mathbf{e}_{\phi_n} \, ,
\end{aligned}\end{equation*}
where $ \mathbf{e}_r $ is the radial versor and $ \{ \mathbf{e}_{\phi_n} \}_{n=1\ldots d-1} $ is a canonical basis for the tangent space of the sphere; moreover,
\[\begin{aligned}
\Delta f(r,\phi_1,\ldots,\phi_{d-1})= & \frac1{(\psi(r))^{d-1}}\frac{\partial}{\partial
r}\left[(\psi(r))^{d-1} \frac{\partial f}{\partial
r}(r,\phi_1,\ldots,\phi_{d-1})
\right]\\
& + \frac1{(\psi(r))^2}\Delta_{{\mathbb
S}^{d-1}}f(r,\phi_1,\ldots,\phi_{d-1}) \, .
\end{aligned}
\]
For \it radial \rm functions, that is functions depending on $r$ only, one then has
\begin{equation}\label{lap-mod}
\Delta f(r)=f^{\prime\prime}(r)+(d-1)\frac{\psi^\prime(r)}{\psi(r)}f^\prime(r) \, .
\end{equation}
We underline the following well-known geometrical facts, for which we refer e.g.\ to \cite{Besse}, \cite{GW}, \cite{G}:
\begin{itemize}
\item the quantity $(n-1)\frac{\psi^\prime(r)}{\psi(r)}$ represents the mean curvature of the geodesic sphere of radius $r$ in the radial direction;
\item let $\omega_d$ be the volume of the $d$-dimensional unit sphere. Then
\[
S(r):=\omega_n (\psi(r))^{d-1},\ \ \ V(r):=\int_0^rS(t)\,{\rm d}t=\omega_n\int_0^r(\psi(t))^{d-1}\,{\rm d}t
\]
are the area of the geodesic sphere $\partial B_r$ and the volume of the geodesic ball $B_r(o)$, respectively;
\item let $\rm Ric\it\, (\partial r,\partial r)$ be the Ricci tensor in the radial direction, and $K_\pi(r)$ be
sectional curvature w.r.t planes containing $\partial r$. Then
\[
\frac1{n-1}\rm Ric\it\, (\partial r,\partial
r)=K_\pi(r)=-\frac{\psi^{\prime\prime}(r)}{\psi(r)}.
\]
Moreover, the sectional curvature w.r.t.\ planes orthogonal to $\partial r$ is given by $\frac{1-(\psi^\prime(r))^2}{(\psi(r))^2}$. Sectional curvatures equal -1 on the hyperbolic space, whereas they are still negative, but tending to zero (as multiples of $-r^{2(a-1)}$) when one has, for large $r$, $\psi(r)=e^{r^a}$ for some $a\in(0,1)$, which is the case we are going to discuss in more detail hereafter.
\end{itemize}

As just said, we concentrate on the case in which $\psi(r)\sim c_1e^{c_2r^a/(d-1)}$ for large $r$ and for some given $a\in(0,1)$, $c_1,c_2>0$. Such manifolds, loosely speaking, interpolate between the Euclidean situation (where $\psi(r)=r$) and the Hyperbolic situation (where $\psi(r)=\sinh r\sim e^r/2$ as $r\to+\infty$). We perform the following calculations in the case $c_1=c_2=1$ for notational simplicity, assuming also again for simplicity that $\psi(r)= e^{r^a/(d-1)}$ for $r\ge1$. Very similar calculations can be performed in the assumptions of Proposition \ref{thm: example}.

\smallskip

Under such assumptions, we point out that \eqref{lap-mod} holds with $ (d-1)\psi^\prime(r)/\psi(r) = h(r)  $, where
$$ b_1 \, \frac{1}{r} \le h(r) \le b_2 \, \frac{1}{r} \ \ \forall r \in (0,1) \, , \quad D_1 \, \frac{a}{r^{1-a}} \le h(r) \le D_2 \, \frac{a}{r^{1-a}} \ \ \forall r \ge 1 \, ,  $$
for some positive constants $ D_1 $ and $ D_2 $ depending on $ c_1 $, $ c_2 $ and $d$, and $ b_1 $, $b_2$ depending on $ c_1 $, $c_2$, $d$ and $a$.

\smallskip

Our first goal will be to show that, for the manifold considered, the Sobolev inequality \eqref{eq.Sob-fam} holds with $\overline{\sigma}=d/(d-2)$, and with condition \eqref{eq: cond-C} holding as well for a suitable $\gamma>0$, \it provided radial functions are considered\rm.

To this end we use the characterization of one-dimensional Sobolev inequality given in \cite[Theorem 1.14]{kufner}. In our case, the condition to be satisfied is the following:
\begin{equation*}\label{eq.sobolev-1d}
C_\sigma:=\sup_{x>0}\left(\int_0^x[\psi(r)]^{d-1}{\rm d}r\right)^{\frac1{2\sigma}}\left(\int_x^{+\infty}[\psi(r)]^{1-d}{\rm d}r\right)^{\frac12}<+\infty,
\end{equation*}
and $C_\sigma$ given above can then be taken as the constant appearing in \eqref{eq.Sob-fam}. It is easy to verify that the quantity
\[
\left(\int_0^x[\psi(r)]^{d-1}{\rm d}r\right)^{\frac1{2\sigma}}\left(\int_x^{+\infty}[\psi(r)]^{1-d}{\rm d}r\right)^{\frac12}
\]
is uniformly bounded as a function of $x\in(0,1]$ and $\sigma\in[1,d/(d-2)]$. So we are left with investigating the quantity
\[
A(x):=\left(\int_0^xe^{(d-1)r^a}{\rm d}r\right)^{\frac1{2\sigma}}\left(\int_x^{+\infty}e^{-(d-1)r^a}{\rm d}r\right)^{\frac12}
\]
for $x>1$ and $\sigma\in[1,d/(d-2)]$. Elementary calculations involving de l'H\^opital's rule show that
\[
A(x)\asymp x^{\frac{(1-a)(\sigma+1))}{2\sigma}}e^{-\frac{(d-1)(\sigma-1)}{2\sigma}x^a}\ \ \textrm{as}\ x\to+\infty,
\]
where the proportionality constants can be taken not to depend on $\sigma\in[1,d/(d-2)]$. Elementary estimates involving the function \[g(x):=x^{\frac{(1-a)(\sigma+1))}{2\sigma}}e^{-\frac{(d-1)(\sigma-1)}{2\sigma}x^a}\] show that
\[
A(x)\le \frac{\widehat C}{(\sigma-1)^{\frac{1-a}a}}\ \ \ \ \forall x\ge1
\]
where ${\widehat C}>0$ does not depend on $\sigma$, and $\sigma\in(1,d/(d-2)]$. This yields our claim for radial functions. We shall summarize this statement in Proposition \ref{thm: example} below.

Besides, we claim that in the above mentioned examples, the bound \eqref{eq: log-alfa} is sharp.

To this end, we shall construct a suitable (radial) subsolution. It will be of the following form:
\[
\underline{u}(r,t):=\begin{cases}C \, (t+t_0)^{-\frac1{m-1}}\left[\eta\,[\log(t+t_0)]^{\frac{2-a}{a}}-r^{2-a}\right]_+^{\frac1{m-1}} &\textrm{for}\ r\ge1,\\
C \,(t+t_0)^{-\frac1{m-1}}\left[\eta\,[\log(t+t_0)]^{\frac{2-a}{a}}-\frac{2-a}{2}r^{2}-\frac a2\right]_+^{\frac1{m-1}} &\textrm{for}\ r\in[0,1).
\end{cases}
\]
We omit the long, although straightforward, calculations leading to the proof that $\underline{u}$ is indeed a subsolution if $t_0$ is sufficiently large, $C$ and, subsequently, $\gamma$ are instead chosen to be small enough. Given this fact, it is easy to show that the time behaviour of $\|\underline{u}(t)\|_\infty$ is exactly the one predicted by \eqref{eq: log-alfa}, with $\gamma=(1-a)/a$.

We summarize the above results in the following proposition. Hereafter we use the subscript ``rad'' to indicate that functions in the corresponding space are assumed to be radial.

\begin{prop}\label{thm: example}
Let $M$ be a model manifold associated with a function $\psi$ satisfying $\psi'(r)>0$ for all $r>0$ and
\[\psi(r)\sim c_1 e^{c_2r^a},\ \ \psi'(r)\sim \left(c_1 e^{c_2r^a}\right)'\ \ \textrm{as}\ r\to+\infty \, ,\]
where $c_1,c_2$ are positive constants and $a\in(0,1)$.

Then the Sobolev inequalities \eqref{eq.Sob-fam}, restricted to \emph{radial functions}, hold for all $\sigma\in(1,d/(d-2))$. Moreover, the constant $C_\sigma$ in \eqref{eq.Sob-fam} satisfies condition \eqref{eq: cond-C} with $\gamma=(1-a)/a$. Hence, the bound \eqref{eq: log-alfa} holds for \emph{radial solutions} $u(t)$ of the porous medium equation posed on $M$ corresponding to data $u_0\in L^1_{\rm rad}(M)$, that is
\[
\|u(t)\|_\infty\le K \left[ \log\left(t \, \| u_0 \|_1^{m-1} + e \right) \right]^{\frac{2-a}{a(m-1)}}  t^{-\frac{1}{m-1}} \quad \forall t > 0 \, .
\]

In addition, such bound is sharp, in the sense that a matching lower bound can be given for an appropriate class of initial data.
\end{prop}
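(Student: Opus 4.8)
The plan is to dispatch the two assertions of Proposition~\ref{thm: example} separately. The upper bound I would obtain by checking, \emph{on the radial class only}, the hypotheses of Theorem~\ref{thm: sobo-deg}; the sharpness I would obtain by a comparison argument against the explicit subsolution displayed just above the statement. The legitimacy of restricting to radial functions rests on the fact that the flow \eqref{eq-non-pesata} on a model manifold preserves radial symmetry---by uniqueness together with the rotational invariance of the equation and of the datum $u_0\in L^1_{\rm rad}(M)$---so that every function to which the Sobolev inequality is applied in the derivation of \eqref{eq: log-alfa} (namely the powers $u^{(q+m)/2}$, and likewise in the base smoothing estimate \eqref{eq:log-gamma-5}) is itself radial.

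First I would reduce \eqref{eq.Sob-fam} to a one-dimensional weighted inequality. For $f=f(r)$ one has $\|f\|_{2\sigma}^{2\sigma}=\omega_d\int_0^{\infty}|f|^{2\sigma}\psi^{d-1}\,{\rm d}r$ and $\|\nabla f\|_2^2=\omega_d\int_0^{\infty}|f'|^2\psi^{d-1}\,{\rm d}r$, so the required inequality carries the common weight $w=\psi^{d-1}$. By the characterization in \cite[Theorem~1.14]{kufner} the optimal constant is $C_\sigma=\sup_{x>0}A(x)$ with $A(x)=(\int_0^x\psi^{d-1})^{1/(2\sigma)}(\int_x^{\infty}\psi^{1-d})^{1/2}$. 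The contribution of $x\in(0,1]$ is uniformly bounded in $x$ and in $\sigma\in[1,d/(d-2)]$, since there the weight is comparable to the Euclidean one. The decisive region is $x\ge1$: using $\psi(r)\asymp e^{c_2 r^a}$ and de l'H\^opital I would establish $A(x)\asymp x^{(1-a)(\sigma+1)/(2\sigma)}\,e^{-(d-1)(\sigma-1)x^a/(2\sigma)}$, with constants uniform in $\sigma$. Maximizing this envelope in $x$ (its critical point satisfies $x^a\sim{\rm const}/(\sigma-1)$) gives $C_\sigma\le\widehat{C}\,(\sigma-1)^{-(1-a)/a}$, which is exactly \eqref{eq: cond-C} with $\gamma=(1-a)/a$. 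Feeding $\gamma=(1-a)/a$ into Theorem~\ref{thm: sobo-deg} and using $(1+2\gamma)/(m-1)=(2-a)/(a(m-1))$ yields the announced bound.

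For the lower bound the plan is to verify that the two-piece function $\underline{u}$ introduced above the statement is a genuine subsolution and then to invoke comparison. Its pressure $\underline{u}^{m-1}$ is, on $\{r\ge1\}$, affine in $r^{2-a}$ and proportional to $(t+t_0)^{-1}$; the quadratic cap on $\{r<1\}$ is chosen so that the profile is $C^1$ across $r=1$ and regular at the pole. Granting the subsolution property, the comparison principle for the porous medium equation---available here through the same approximation scheme used for existence---gives $u(t)\ge\underline{u}(t)$ whenever $u_0\ge\underline{u}(\cdot,0)$, and a direct inspection shows $\|\underline{u}(t)\|_\infty\asymp[\log(t+t_0)]^{(2-a)/(a(m-1))}(t+t_0)^{-1/(m-1)}$, matching the upper bound as $t\to+\infty$.

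The hard part is the subsolution verification $\partial_t\underline{u}\le\Delta(\underline{u}^m)$ in the weak sense. Using \eqref{lap-mod} with $(d-1)\psi'/\psi=h(r)\asymp a\,r^{a-1}$ for $r\ge1$, one sees that the exponent $2-a$ is tuned precisely so that the radial drift $h(r)(\underline{u}^m)'$ is \emph{bounded in $r$}: the factor $r^{1-a}$ coming from $\partial_r(\underline{u}^{m-1})$ cancels the $r^{a-1}$ of $h$, leaving a contribution comparable to the time derivative $\partial_t\underline{u}$ (both scale like $(t+t_0)^{-m/(m-1)}$). One must then dominate the remaining second-order term, the slowly varying $\partial_t$-correction produced by the $\log$-factor, the free boundary where $\underline{u}$ vanishes, and the matching at $r=1$; it is exactly these estimates that force $t_0$ large and $C,\eta$ small. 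I would organize the computation by passing to the pressure and reducing, region by region, to a one-dimensional differential inequality, which makes the required smallness transparent while keeping the algebra manageable.
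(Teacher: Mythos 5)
Your proposal follows essentially the same route as the paper: the reduction of the radial Sobolev inequality to the one-dimensional weighted characterization of \cite[Theorem 1.14]{kufner}, the splitting into $x\in(0,1]$ and $x\ge 1$ with the asymptotics $A(x)\asymp x^{\frac{(1-a)(\sigma+1)}{2\sigma}}e^{-\frac{(d-1)(\sigma-1)}{2\sigma}x^a}$ and the maximization yielding $C_\sigma\lesssim(\sigma-1)^{-(1-a)/a}$, the application of Theorem \ref{thm: sobo-deg} with $\gamma=(1-a)/a$, and the sharpness via the same two-piece subsolution and comparison. Your explicit remarks on radial symmetry being preserved by the flow and on the $C^1$ matching of the profile at $r=1$ are points the paper leaves implicit, but they do not change the argument.
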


\begin{rem}\rm 
Note that, under the same assumptions on $M$ as in Proposition \ref{thm: example} and as a corollary of the latter, the bound $ \| u(t) \|_\infty \lesssim \left( \log t \right)^{\frac{2-a}{a(m-1)}}  t^{-\frac{1}{m-1}}  $ for large $t$ also holds for non-radial solutions having a compactly supported initial datum, this being a consequence of standard comparison arguments.
\end{rem}

\section{Sub-Poincar\'e inequalities}\label{sub}
We consider two positive weights $ \rho_\nu $ and $ \rho_\mu $ such that
\[
\rho_\nu, \rho_\nu^{-1},\rho_\mu, \rho_\mu^{-1}\in L_{{\rm loc}}^\infty(M).
\]
It will be required that the sub-Poincar\'e inequality
\begin{equation}\label{sub-poin}
\left\| f \right\|_{p;\nu} \le D \left\| \nabla f \right\|_{2;\mu} \quad \forall f \in C^\infty_c(M)
\end{equation}
holds for some $ p \in [1,2) $, $ D>0 $ and that the Sobolev-type inequality
\begin{equation}\label{eq.Sob-wei}
\| f \|_{2\sigma;\nu} \le C  \, \|\nabla f\|_{2;\mu}  \quad \forall f \in C^\infty_c(M)
\end{equation}
holds for some $ \sigma > 1 $, $ C>0 $. Here, for all $p\ge1$ we set
\[
L^p(M;\mu):=\left\{f: \|f\|_{p;\mu}:=\left(\int_M|f|^p\,{\rm d}\mu\right)^{1/p}<+\infty\right\},
\]
with a similar definition for $L^p(M;\nu)$.

Below we shall give examples of weights such that the above assumption hold. We also refer to \cite{GM} for larger classes of examples for which \eqref{eq.Sob-wei} holds, noting also that an analogous approach allows us to identify by similar methods, using the results of \cite{kufner}, weights such that also \eqref{sub-poin} is valid.

Under such assumptions we investigate the long-time behaviour of solutions to the following weighted porous medium equation:
\begin{equation}\label{eq-pesata}
\begin{cases}
\rho_\nu(x)u_t = \operatorname{div}\left[\rho_\mu(x) \nabla{(u^m)}\right] & \textrm{in } M \times \mathbb{R}^{+} \, , \\
u =u_0 \in L^1(M;\nu) & \textrm{on } M \times \{ 0 \} \, .
\end{cases}
\end{equation}

Our main result is the following.
\begin{thm}\label{thm: sub-poin}
Suppose that the sub-Poincar\'e inequality \eqref{sub-poin} and the Sobolev-type inequality \eqref{eq.Sob-wei} hold. Then there exists a constant $ B>0 $, depending on $p$, $ D $, $ \sigma $, $C$ and $m$ but independent of $u_0$, such that any solution $u$ to \eqref{eq-pesata} satisfies the absolute bound
\begin{equation}\label{eq: absb}
\left\| u(t) \right\|_\infty \le B \, t^{-\frac{1}{m-1}} \quad \forall t>0 \, .
\end{equation}
\end{thm}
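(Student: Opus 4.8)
The plan is to combine an energy estimate for the weighted equation \eqref{eq-pesata} with the two functional inequalities in such a way that the resulting differential inequality integrates to a bound that is insensitive to the initial datum; this is exactly the mechanism behind an absolute bound. First I would record the basic energy estimate: writing $\rd\nu=\rho_\nu\dx$, $\rd\mu=\rho_\mu\dx$ and testing \eqref{eq-pesata} against $u^q$, an integration by parts gives, for every $q>0$,
\[
\frac{\rd}{\rd t}\|u(t)\|_{q+1;\nu}^{q+1}=-\frac{4q(q+1)m}{(q+m)^2}\,\bigl\|\nabla u^{\frac{q+m}{2}}(t)\bigr\|_{2;\mu}^2 ,
\]
which is the exact weighted analogue of \eqref{eq.deriv}.

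The crucial observation is that \eqref{sub-poin} and \eqref{eq.Sob-wei}, both controlling an $L^r$-norm by the single quantity $\|\nabla f\|_{2;\mu}$, interpolate: by log-convexity of the $L^r(M;\nu)$-norms one obtains a whole family
\[
\|f\|_{r;\nu}\le C_r\,\|\nabla f\|_{2;\mu}\qquad\forall f\in C_c^\infty(M),\ \forall r\in[p,2\sigma],
\]
with $C_r\le\max\{C,D\}$ uniformly. Since $p<2<2\sigma$ (here $\sigma>1$ is used), this family contains every exponent $r\in[p,2)$. Applying it to $f=u^{(q+m)/2}$ yields $\|\nabla u^{(q+m)/2}\|_{2;\mu}^2\ge C_r^{-2}\,\|u\|_{r(q+m)/2;\nu}^{q+m}$, and the key point is to choose $r=r(q):=2(q+1)/(q+m)$, so that the right-hand norm is \emph{exactly} $\|u\|_{q+1;\nu}$. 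Because $r(q)\uparrow2$ as $q\to\infty$, for every sufficiently large $q$ one has $r(q)\in[p,2)$, so this choice is admissible. Setting $y(t):=\|u(t)\|_{q+1;\nu}^{q+1}$ the estimate becomes the autonomous inequality
\[
\frac{\rd y}{\rd t}\le -c_q\,y^{\lambda},\qquad \lambda:=\frac{q+m}{q+1}=1+\frac{m-1}{q+1}>1,\quad c_q:=\frac{4q(q+1)m}{(q+m)^2\,C_{r(q)}^2}.
\]
No interpolation against $\|u_0\|_1$ is performed, and this is precisely what keeps the bound absolute. Integrating and discarding the (beneficial, since $\lambda>1$) term coming from $y(0)$ gives $y(t)\le[c_q(\lambda-1)t]^{-1/(\lambda-1)}$, i.e., since $1/(\lambda-1)=(q+1)/(m-1)$,
\[
\|u(t)\|_{q+1;\nu}\le E_q\,t^{-\frac{1}{m-1}}\qquad\forall t>0 ,
\]
with $E_q$ independent of $u_0$.

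Finally I would fix one admissible large $q$ and bootstrap to $L^\infty$. The Sobolev inequality \eqref{eq.Sob-wei} yields, by the standard Moser/Nash iteration in the weighted setting (as in \cite{GM}, \cite{BG}), the smoothing effect
\[
\|u(t)\|_\infty\le K\,t^{-\frac{\sigma}{(\sigma-1)(q+1)+\sigma(m-1)}}\,\|u(t/2)\|_{q+1;\nu}^{\frac{(\sigma-1)(q+1)}{(\sigma-1)(q+1)+\sigma(m-1)}} .
\]
Inserting the absolute bound for $\|u(t/2)\|_{q+1;\nu}$ and using the evolution on $[t/2,t]$, a direct computation shows that the two time-powers combine to exactly $-1/(m-1)$, via the algebraic identity $\sigma/(\alpha+\beta)+\alpha/[(m-1)(\alpha+\beta)]=1/(m-1)$ with $\alpha=(\sigma-1)(q+1)$ and $\beta=\sigma(m-1)$, while the $u_0$-dependence has already disappeared at the previous step. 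This produces \eqref{eq: absb} with $B$ depending only on $p,D,\sigma,C,m$.

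The main obstacle is conceptual rather than computational: one must resist closing the differential inequality by interpolating the energy term against the non-expansive $L^1(M;\nu)$-norm, as was natural for the Poincaré-based Theorem \ref{teo: log-1}, because that reintroduces a factor $\|u_0\|_1$ and destroys the absolute character of the estimate. The role of the sub-Poincar\'e inequality \eqref{sub-poin}, with an exponent strictly below $2$, is exactly to render the exponent-matched choice $r(q)=2(q+1)/(q+m)<2$ admissible, so that the energy term can be rewritten purely in terms of $\|u\|_{q+1;\nu}$ and the inequality becomes scale-invariant in the initial datum. The remaining points requiring care are checking that a valid $r(q)\in[p,2)$ exists for large $q$ (where $\sigma>1$ enters) and verifying the weighted smoothing effect stemming from \eqref{eq.Sob-wei}.
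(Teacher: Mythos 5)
Your proposal is correct, and it follows a genuinely different route from the paper's. The paper never interpolates the two functional inequalities against each other: it applies the sub-Poincar\'e inequality at its native exponent $p$, so that the energy inequality produces the norm $\|u(t)\|_{\frac{p(q+m)}{2};\nu}$, and then closes the differential inequality by interpolating that norm between $L^{q+1}(M;\nu)$ and $L^\infty$, using the non-expansivity of the $L^\infty$ norm; this yields an $L^{q+1;\nu}$ decay estimate still carrying a power of $\|u_0\|_\infty$, and hence a bound of the form $\|u(t)\|_\infty \le c\, t^{-a}\, \|u(t/2)\|_\infty^{\theta}$ with $\theta\in(0,1)$, from which the absolute bound is extracted by iterating this self-improving inequality (this is why the paper needs the technical condition \eqref{e-q} on $q$). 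Your argument instead exploits log-convexity of $L^r(M;\nu)$ norms to merge \eqref{sub-poin} and \eqref{eq.Sob-wei} into the family $\|f\|_{r;\nu}\le \max\{C,D\}\,\|\nabla f\|_{2;\mu}$ for all $r\in[p,2\sigma]$, and then makes the exponent-matched choice $r(q)=2(q+1)/(q+m)<2$, which is admissible for $q$ large precisely because $p<2$ (note that your admissibility condition $q\ge (pm-2)/(2-p)$ is the same threshold appearing in the paper's \eqref{e-q}). This closes the ODE for $y(t)=\|u(t)\|_{q+1;\nu}^{q+1}$ autonomously, with no trace of the initial datum, so the absolute character of the bound appears already at the $L^{q+1;\nu}$ level and a single application of the weighted smoothing effect finishes the proof --- the algebraic identity you check, $\frac{\sigma}{\alpha+\beta}+\frac{\alpha}{(m-1)(\alpha+\beta)}=\frac{1}{m-1}$ with $\alpha=(\sigma-1)(q+1)$, $\beta=\sigma(m-1)$, is exactly what makes the two time powers recombine. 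What your approach buys is the elimination of both the $L^\infty$-interpolation step and the final iteration, making the role of the hypothesis $p<2$ completely transparent; what the paper's approach buys is that it uses the sub-Poincar\'e inequality only at the single exponent $p$ (no interpolation of functional inequalities is needed), at the cost of a slightly longer bootstrap. Both proofs rely on the same two external ingredients (the weighted energy identity and the Sobolev-driven smoothing effect of \cite{GM}, \cite{BG}), and your claimed dependence of $B$ on $p$, $D$, $\sigma$, $C$, $m$ only is accurate, since the fixed $q$ depends only on $p$ and $m$ and the interpolated constants are bounded by $\max\{C,D\}$.
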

\begin{proof}
With no loss of generality let us assume $ u_0 \not\equiv 0 $ and $ u_0 \in L^1(M;\nu) \cap L^\infty(M) $. Let $ q \in (1,\infty) $ be such that
\begin{equation}\label{e-q}
q > 	\max \left\{ \frac{pm-2}{2-p} , \, \frac{2-pm}{p} \right\} .
\end{equation}
By means of \eqref{sub-poin} and computations analogous to the ones that led to \eqref{eq.deriv}, we obtain:
\begin{equation}\label{eq.deriv-2}
\frac{{\rm d}}{{\rm d}t}\|u(t)\|_{q+1;\nu}^{q+1} \le -\frac{4q(q+1)m}{(m+q)^2\,D^2} \, \|u(t)\|_{\frac{p(q+m)}{2};\nu}^{q+m} \, .
\end{equation}
In view of \eqref{e-q} and of the non-expansivity of the $L^\infty$ norm, a standard interpolation yields
\begin{equation}\label{eq.deriv-3}
\| u(t) \|_{q+1;\nu} \le \| u(t) \|_{\frac{p(q+m)}{2};\nu}^{\frac{p(q+m)}{2(q+1)}} \, \| u_0 \|_{\infty}^{\frac{(2-p)q+2-pm}{2(q+1)}} \quad \forall t>0 \, ,
\end{equation}
so that by combining \eqref{eq.deriv-2} and \eqref{eq.deriv-3} we can deduce
\begin{equation*}\label{eq.deriv-4}
\frac{{\rm d}}{{\rm d}t}\|u(t)\|_{q+1;\nu}^{q+1} \le -\frac{4q(q+1)m}{(m+q)^2\,D^2} \, \frac{\|u(t)\|_{q+1;\nu}^{(q+1) \frac{2}{p}}}{\| u_0 \|_\infty^{\frac{(2-p)q+2-pm}{p}}} \, .
\end{equation*}
Integrating the above differential inequality with respect to the variable $ y(t):=\| u(t) \|_{q+1;\nu}^{q+1} $ we find
\begin{equation*}\label{eq.deriv-5}
y(t)^{\frac{2-p}{p}} \le \frac{1}{\frac{1}{y(0)^{\frac{2-p}{p}}} + \frac{4(2-p)q(q+1)m}{p(m+q)^2 \,D^2 \, \| u_0 \|_\infty^{\frac{(2-p)q+2-pm}{p}}} \, t } \quad \forall t>0 \, ,
\end{equation*}
which implies
\begin{equation}\label{eq.deriv-6}
\| u(t) \|_{q+1;\nu} \le A \, \frac{\| u_0 \|_\infty^{\frac{(2-p)q+2-pm}{(2-p)(q+1)}}}{ t^{\frac{r}{(2-p)(q+1)}} } \quad \forall t>0 \, ,
\end{equation}
where $ A $ is a positive constant depending only on $ q $, $p$, $ D $ and $m$. Using the smoothing effect \eqref{eq.smoothing} (which holds due to \eqref{eq.Sob-wei}, upon replacing $ \| u_0 \|_{q+1} $ with $\| u_0 \|_{q+1;\nu} $) between $ t $ and $t/2$, together with \eqref{eq.deriv-6}, we get
\begin{equation}\label{eq.deriv-7}
\begin{aligned}
\| u(t) \|_{\infty} & \le K \, 2^{\frac{\sigma}{(\sigma-1)(q+1)+\sigma(m-1)}} \, t^{-\frac{\sigma}{(\sigma-1)(q+1)+\sigma(m-1)}} \, \| u(t/2) \|_{q+1;\nu}^{\frac{(\sigma-1)(q+1)}{(\sigma-1)(q+1)+\sigma(m-1)}} \\
& \le A^\prime \, t^{-\frac{2\sigma-p}{(2-p)[(\sigma-1)(q+1)+\sigma(m-1)]}} \, \| u_0 \|_\infty^{\frac{(\sigma-1)[(2-p)q+2-pm]}{(2-p)[(\sigma-1)(q+1)+\sigma(m-1)]}} \quad \forall t>0 \, ,
\end{aligned}
\end{equation}
where $ A^\prime $ is another positive constant depending only on $ q $, $p$, $ D $, $ \sigma $, $K$ and $m$. By shifting the time origin from $0$ to $t/2$ in \eqref{eq.deriv-7} we then infer
\begin{equation}\label{eq.deriv-8}
\begin{aligned}
\| u(t) \|_{\infty} \le &A^\prime \, 2^{\frac{2\sigma-p}{(2-p)[(\sigma-1)(q+1)+\sigma(m-1)]}}\\ & \times t^{-\frac{2\sigma-p}{(2-p)[(\sigma-1)(q+1)+\sigma(m-1)]}} \, \| u(t/2) \|_\infty^{\frac{(\sigma-1)[(2-p)q+2-pm]}{(2-p)[(\sigma-1)(q+1)+\sigma(m-1)]}} \qquad  \forall t>0 \, .
\end{aligned}
\end{equation}
Since
$$ \frac{(\sigma-1)[(2-p)q+2-pm]}{(2-p)[(\sigma-1)(q+1)+\sigma(m-1)]} \in (0,1) \, ,  $$
the validity of \eqref{eq: absb} is just a consequence of a routine iteration of inequality \eqref{eq.deriv-8}.
\end{proof}

\begin{rem}\label{sharp}\rm
The sharpness of the bound \eqref{eq: absb} is standard. In fact, it is enough to pick a positive solution $W_R$ to the sublinear elliptic problem
\begin{equation}\label{elliptic}
\begin{cases}
-\operatorname{div}\left( \rho_\mu \nabla{W_R} \right) = \rho_\nu \, W_R^{\frac{1}{m}} & \textrm{in } B_R \, , \\
W_R=0 & \textrm{on } \partial B_R \, ,
\end{cases}
\end{equation}
for any $ R>0 $, and notice that the function
$$ (t+1)^{-\frac{1}{m-1}} \, W^{1/m}_R(x) \, , $$
set to zero outside $B_R \times \mathbb{R}^{+} $, is a \emph{subsolution} to \eqref{eq-pesata} with initial datum $ u_0=W^{1/m}_R $.
Existence of solutions to problem \eqref{elliptic} is standard given the fact that that $\rho_\nu, \rho_\nu^{-1},\rho_\mu, \rho_\mu^{-1}\in L^\infty_{{\rm loc}}(M)$ and $m>1$.
\end{rem}

\subsection{Examples}\label{exa-4}
We shall discuss here some particular examples of \emph{weighted} manifolds on which the assumptions of Theorem \ref{thm: sub-poin} hold. For simplicity, as in Section \ref{exa-3} (whose notations we take for granted), we shall restrict ourselves to \emph{model manifolds}. So, given two radial weights $ \rho_\nu  $ and $ \rho_\mu $, as a consequence of \cite[Theorem 1.15]{kufner} we have that the sub-Poincar\'e inequality \eqref{sub-poin} holds in $ C^\infty_{c,\rm rad}(M) $ for some $ p \in (1,2) $ if and only if
\begin{equation}\label{cond-sub-poin}\begin{aligned}
\int_0^\infty \left( \int_0^x \rho_\nu(r) \, \psi(r)^{d-1} \, \mathrm{d}r \right)^{\frac{2}{2-p}} \left( \int_x^\infty  \frac{\psi(r)^{1-d}}{\rho_\mu(r)} \, \mathrm{d}r \right)^{\frac{2(p-1)}{2-p}} \frac{\psi(x)^{1-d}}{\rho_\mu(x)} \, \mathrm{d}x < \infty \, .
\end{aligned}\end{equation}
Let us focus on ``hyperbolic'' models, namely we pick $ \psi(r) \sim e^{\frac{c}{d-1} r} $ for large $r$, with $c>0$. As for the weights, one can choose for instance $ \rho_\nu(r)=e^{-\frac{\alpha}{d-1} r} $ and $ \rho_\mu(r)=e^{\frac{\beta}{d-1} r} $, with $ \alpha,\beta \ge 0 $. Straightforward computations show that \eqref{cond-sub-poin} is fulfilled if and only if
\begin{equation*}\label{c-exa-sub}
\max\left\{ 2 \, \frac{c-\alpha}{c+\beta} \, , \ 1 \right\} <p<2 \, .
\end{equation*}
Moreover, the Sobolev inequality \eqref{eq.Sob-wei} holds with $ \sigma=2^\ast/2 $, just as a consequence of the validity of the same inequality on $\mathbb{H}^d$: in fact $\rho_\nu(r)\le1$ and $\rho_\mu(r)\ge1$ for all $r$. We can therefore assert that, with the above choices, the absolute bound \eqref{eq: absb} holds for radial solutions to \eqref{eq-pesata} provided $ 2 \, (c-\alpha)(c+\beta)<2$
which clearly holds iff $ -\alpha < \beta $.

Actually the just established results can be extended to the non-radial setting as well. In fact, we have proved that \eqref{sub-poin} is valid in $ C^\infty_{c,\rm rad}(M) $ with some constant $ D=D_{\rm rad} $. Nevertheless, given any $ f \in C^\infty_c(M) $ (not necessarily radial), we have:
\begin{equation*}\label{cond-sub-poin-nonrad}
\begin{aligned}
\| f \|_{p;\nu} = & \left( \int_{\mathbb{S}_{d-1}} \int_{0}^{+\infty} |f(r,\phi_1,\ldots,\phi_{d-1})|^p \, \psi(r)^{d-1} \, \rho_{\nu}(r) \mathrm{d}r \, \mathrm{d}\Theta  \right)^{\frac{1}{p}} \\
 \le & \left| \mathbb{S}_{d-1} \right|^{\frac{1}{2}-\frac{1}{p}} \, D_{\rm rad} \left[ \int_{\mathbb{S}_{d-1}} \left( \int_{0}^{+\infty} \left|\frac{\partial f}{\partial r}(r,\phi_1,\ldots,\phi_{d-1})\right|^2 \, \psi(r)^{d-1} \, \rho_{\mu}(r) \mathrm{d}r \right)^{\frac{p}{2}} \mathrm{d}\Theta  \right]^{\frac{1}{p}} \\
 \le &   D_{\rm rad} \, \| \nabla{f} \|_{2;\mu}
\end{aligned}
\end{equation*}
by H\"older inequality, which can be used because of the crucial assumption $p<2$. As a consequence we can deduce that, with the above choices, the sub-Poincar\'e inequality \eqref{sub-poin} holds in the whole $ C^\infty_{c}(M)$, so that every solution to \eqref{eq-pesata} (not necessarily radial) satisfies the absolute bound \eqref{eq: absb}.

We summarize the above calculations in the following result.

\begin{prop}
Let $M$ be a model manifold associated with a function $\psi$ satisfying $ \psi(r) \sim e^{\frac{c}{d-1} r} $ as $r\to+\infty$, for some $c>0$. Assume that the weights appearing in the weighted porous medium equation \eqref{eq-pesata} are given by $ \rho_\nu(r)=e^{-\frac{\alpha}{d-1} r} $ and $ \rho_\mu(r)=e^{\frac{\beta}{d-1} r} $, with $ \alpha,\beta \ge 0 $ and $-\alpha<\beta$. Then the sub-Poincar\'e inequality \eqref{sub-poin} and the Sobolev inequality \eqref{eq.Sob-wei} hold.

As a consequence, the absolute bound
\[
\left\| u(t) \right\|_\infty \le B \, t^{-\frac{1}{m-1}} \quad \forall t>0
\]
holds for any solution $u(t)$ to \eqref{eq-pesata} with the present choice of $M$ and of $\rho_\mu, \rho_\nu$.
\end{prop}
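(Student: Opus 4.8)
The plan is to recognise that this Proposition is nothing but a concrete instance of Theorem \ref{thm: sub-poin}: its conclusion is \emph{verbatim} the absolute bound \eqref{eq: absb}, and its two structural inputs are the Sobolev inequality \eqref{eq.Sob-wei} and the sub-Poincar\'e inequality \eqref{sub-poin}. So the entire task reduces to checking that, for the specific model $M$ (with profile $\psi(r)\sim e^{cr/(d-1)}$) and the specific weights $\rho_\nu(r)=e^{-\alpha r/(d-1)}$, $\rho_\mu(r)=e^{\beta r/(d-1)}$, both of these functional inequalities hold; once they do, Theorem \ref{thm: sub-poin} supplies the constant $B=B(p,D,\sigma,C,m)$ and the bound, with no further work on the evolution equation itself.

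First I would dispose of the Sobolev inequality \eqref{eq.Sob-wei}, the easy half. Since $\alpha,\beta\ge0$ one has $\rho_\nu\le1\le\rho_\mu$ pointwise, so that $\|f\|_{2\sigma;\nu}\le\|f\|_{2\sigma}$ and $\|\nabla f\|_2\le\|\nabla f\|_{2;\mu}$; it therefore suffices that the \emph{unweighted} Sobolev inequality \eqref{eq.Sob} hold on $M$ with $\sigma=2^\ast/2=d/(d-2)$. This is indeed the case: being a model manifold whose profile grows like $e^{cr/(d-1)}$, $M$ has at least the exponential volume growth of $\mathbb H^d$, and \eqref{eq.Sob} holds on it exactly as on $\mathbb H^d$. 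Chaining the three estimates $\|f\|_{2\sigma;\nu}\le\|f\|_{2\sigma}\le C\|\nabla f\|_2\le C\|\nabla f\|_{2;\mu}$ yields \eqref{eq.Sob-wei} with the admissible value $\sigma=d/(d-2)>1$.

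The substantive part is the sub-Poincar\'e inequality \eqref{sub-poin}, which I would establish first for radial functions and then promote. For radial $f$ the inequality is one-dimensional, so I would invoke the weighted Hardy criterion \cite[Theorem 1.15]{kufner}, whose finiteness requirement is precisely condition \eqref{cond-sub-poin}. Inserting $\psi(r)^{d-1}\sim e^{cr}$, $\rho_\nu(r)=e^{-\alpha r/(d-1)}$ and $\rho_\mu(r)=e^{\beta r/(d-1)}$, each inner integral is asymptotically a pure exponential (one replaces $\int_0^x$ and $\int_x^\infty$ by the integrand at $x$ divided by its rate, distinguishing whether the first inner integral grows or saturates), so the integrand of \eqref{cond-sub-poin} collapses to a single exponential $e^{Ex}$. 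Requiring $E<0$ produces a threshold of the form $p_\ast<p<2$, where $p_\ast$ is an explicit ratio of the exponential rates; a routine sign computation shows $p_\ast<2$ \emph{iff} $-\alpha<\beta$, the standing hypothesis, so an admissible $p\in(1,2)$ exists and the radial sub-Poincar\'e holds with some constant $D_{\rm rad}$. This asymptotic evaluation of \eqref{cond-sub-poin} and the bookkeeping of the exponent $E$ is the only genuinely computational point, and I expect it to be the main (though elementary) obstacle.

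To remove the radiality restriction I would reproduce the slicing argument displayed just before the Proposition: expand $\|f\|_{p;\nu}^p$ as an iterated integral in $(r,\Theta)\in(0,\infty)\times\mathbb S^{d-1}$, apply the radial inequality on each ray $\{\Theta=\text{const}\}$ (using $|\partial_r f|\le|\nabla f|$), integrate over $\mathbb S^{d-1}$, and combine the angular integrals by H\"older with conjugate exponents $2/p$ and $2/(2-p)$. It is exactly here that the hypothesis $p<2$ is indispensable; the powers of $|\mathbb S^{d-1}|$ cancel and one recovers \eqref{sub-poin} on all of $C^\infty_c(M)$ with the same constant $D_{\rm rad}$. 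With \eqref{eq.Sob-wei} and \eqref{sub-poin} both verified, Theorem \ref{thm: sub-poin} applies directly and gives the claimed absolute bound $\|u(t)\|_\infty\le B\,t^{-1/(m-1)}$ for all $t>0$.
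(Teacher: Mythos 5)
Your proposal follows the paper's own argument essentially step for step: the reduction of the Proposition to Theorem \ref{thm: sub-poin}; the chain $\|f\|_{2\sigma;\nu}\le\|f\|_{2\sigma}\le C\|\nabla f\|_2\le C\|\nabla f\|_{2;\mu}$ (using $\rho_\nu\le 1\le \rho_\mu$) to obtain the weighted Sobolev inequality \eqref{eq.Sob-wei} with $\sigma=2^\ast/2$; the Kufner--Opic criterion \eqref{cond-sub-poin} for the \emph{radial} sub-Poincar\'e inequality, whose evaluation for the given exponential data yields exactly the paper's admissible range $\max\{2(c-\alpha)/(c+\beta),\,1\}<p<2$, nonempty precisely when $-\alpha<\beta$; the slicing-plus-H\"older promotion from radial to arbitrary $f\in C^\infty_c(M)$, where $p<2$ is indeed the crucial point; and the final appeal to Theorem \ref{thm: sub-poin}. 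In structure and in all key steps this coincides with the paper's proof.

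One sentence of yours should nevertheless be repaired: you justify the unweighted Sobolev inequality \eqref{eq.Sob} on $M$ by saying that $M$ ``has at least the exponential volume growth of $\mathbb{H}^d$.'' Lower bounds on volume growth never imply Sobolev-type (equivalently, Faber--Krahn or isoperimetric) inequalities: a manifold can have arbitrarily fast volume growth while containing regions of arbitrarily bad isoperimetry, so volume growth cannot serve as the reason. The paper does not argue this way; it simply invokes the validity of \eqref{eq.Sob} on such hyperbolic-type models (as on $\mathbb{H}^d$) and then uses the weight comparison. If you want an actual justification, for radial functions \eqref{eq.Sob} follows from the same one-dimensional criterion of \cite{kufner} used in Section \ref{exa-3} (the relevant supremum is finite because $\psi(r)\sim e^{\frac{c}{d-1}r}$), or alternatively one can work under the implicit assumption that the model is of Cartan--Hadamard type (e.g.\ $\psi$ convex with $\psi'\ge 1$), in which case \eqref{eq.Sob} holds by the facts recalled in Section \ref{SE}. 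With that sentence replaced, everything else in your proposal stands and matches the paper.
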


\begin{rem}\rm
We point out that, in the above examples, the (weighted) volume of the manifold is infinite provided $ \alpha\le c $. Hence, in such cases, no previous result could be exploited in order to deduce the absolute bound.
\end{rem}

\subsection{An existence result for a weighted sublinear elliptic equation}

\begin{thm}
Let $ \rho_\nu $ and $\rho_\mu$ two weights such that $\rho_\nu, \rho_\nu^{-1},\rho_\mu, \rho_\mu^{-1}\in L_{{\rm loc}}^\infty(M)$ and, moreover, the sub-Poincar\'e inequality \eqref{sub-poin} and the Sobolev-type inequality \eqref{eq.Sob-wei} hold for some $ p \in [1,2) $ and $ \sigma > 1 $. Then there exists a bounded, nontrivial, nonnegative solution $ W $ to the following weighted sublinear elliptic equation:
\begin{equation}\label{sublinear}
-\operatorname{div}\left( \rho_\mu \nabla{W} \right) = \rho_\nu \, W^{\frac{1}{m}} \quad \textrm{in } M \, .
\end{equation}
In addition, $W$ is minimal in the class of bounded, nontrivial, nonnegative very weak solutions of \eqref{sublinear}.
\end{thm}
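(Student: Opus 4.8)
The plan is to construct $W$ as the limit of the solutions $W_R$ to the Dirichlet problem \eqref{elliptic} on balls $B_R$ as $R\to\infty$, exploiting monotonicity in $R$ and a uniform bound furnished by the absolute estimate of Theorem \ref{thm: sub-poin}. First I would recall from Remark \ref{sharp} that for each $R>0$ there exists a positive solution $W_R$ to
\begin{equation*}
\begin{cases}
-\operatorname{div}\left( \rho_\mu \nabla{W_R} \right) = \rho_\nu \, W_R^{\frac{1}{m}} & \textrm{in } B_R \, , \\
W_R=0 & \textrm{on } \partial B_R \, ,
\end{cases}
\end{equation*}
extended by zero outside $B_R$. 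The existence of each $W_R$ is standard (it follows e.g.\ by variational methods or sub-/supersolution iteration, using that $\rho_\nu,\rho_\mu$ and their inverses are in $L^\infty_{\rm loc}$ and that $t\mapsto t^{1/m}$ is sublinear). The key structural fact is \emph{monotonicity}: by the comparison principle for the sublinear elliptic operator one has $W_{R_1}\le W_{R_2}$ whenever $R_1\le R_2$, since $W_{R_1}$ is a subsolution of the problem on $B_{R_2}$.

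The next, and decisive, step is a \emph{uniform} $L^\infty$ bound on the family $\{W_R\}$ independent of $R$. This is exactly where Theorem \ref{thm: sub-poin} enters. As noted in Remark \ref{sharp}, the function $(t+1)^{-1/(m-1)} W_R^{1/m}(x)$ is a subsolution to the weighted PME \eqref{eq-pesata} with initial datum $u_0=W_R^{1/m}$; conversely, the stationary-in-profile structure means that if $u_R(t)$ is the solution of \eqref{eq-pesata} with datum $W_R^{1/m}$, comparison gives
\[
(t+1)^{-\frac{1}{m-1}} W_R^{1/m}(x) \le u_R(x,t) \, .
\]
Evaluating at a fixed time, say $t=1$, and invoking the absolute bound \eqref{eq: absb} applied to $u_R$, we obtain $2^{-1/(m-1)} W_R^{1/m} \le u_R(1) \le B\cdot 1^{-1/(m-1)} = B$, hence $\|W_R\|_\infty \le (2^{1/(m-1)}B)^{m}$, a bound uniform in $R$ since $B$ depends only on $p,D,\sigma,C,m$. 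I would verify that this comparison is legitimate (the subsolution has the correct initial trace and vanishes on the lateral boundary, so comparison on the cylinder $B_R\times(0,\infty)$ applies).

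Granting the uniform bound, the monotone limit $W:=\lim_{R\to\infty} W_R = \sup_R W_R$ exists pointwise, is nonnegative, nontrivial (since $W_R\not\equiv0$ and the sequence is increasing) and satisfies $\|W\|_\infty\le (2^{1/(m-1)}B)^m<\infty$. To pass to the limit in the equation I would use the very-weak (distributional) formulation against a test function $\varphi\in C^\infty_c(M)$: for $R$ large enough that $\supp\varphi\subset B_R$,
\[
\int_M W_R \operatorname{div}\left(\rho_\mu \nabla\varphi\right)\,{\rm d}x = -\int_M \rho_\nu\, W_R^{1/m}\,\varphi\,{\rm d}x \, ,
\]
and dominated convergence (justified by the uniform $L^\infty$ bound together with $\rho_\nu,\rho_\mu\in L^\infty_{\rm loc}$ and the compact support of $\varphi$) lets me send $R\to\infty$ to conclude that $W$ solves \eqref{sublinear} in the very weak sense. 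Finally, \emph{minimality} follows from the same comparison argument: if $V$ is any bounded, nontrivial, nonnegative very weak solution of \eqref{sublinear} on $M$, then its restriction is a supersolution of the Dirichlet problem on each $B_R$, so $W_R\le V$ for every $R$, whence $W=\sup_R W_R\le V$. The main obstacle I anticipate is the rigorous justification of the comparison principles at the very-weak level for the sublinear operator $-\operatorname{div}(\rho_\mu\nabla\cdot)$ with merely locally bounded weights, both for the monotonicity $W_{R_1}\le W_{R_2}$ and for the final minimality step; this requires a careful but standard argument (e.g.\ testing the difference of equations against a truncation of $(W_R-V)_+$ and using the monotonicity of $t\mapsto t^{1/m}$), and the only genuinely new ingredient, the $R$-uniform $L^\infty$ bound, is supplied cleanly by Theorem \ref{thm: sub-poin}.
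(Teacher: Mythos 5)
Your proposal is correct in outline but follows a genuinely different route from the paper. The paper's proof is \emph{parabolic}: it takes a single solution $u$ of \eqref{eq-pesata} with arbitrary nontrivial datum $u_0\ge 0$, sets $U(x,t)=t^{\frac{1}{m-1}}u(x,t)$, observes that the absolute bound \eqref{eq: absb} gives $\|U(t)\|_\infty\le B$ while the B\'enilan--Crandall inequality makes $U$ nondecreasing in $t$, and obtains $W^{\frac1m}=\lim_{t\to\infty}U(\cdot,t)$ directly as a long-time limit (the fact that $W$ solves \eqref{sublinear} is then taken from the argument of \cite{Va04}); minimality is proved by \emph{parabolic} comparison of the Dirichlet approximations $u_R$ with the supersolution $t^{-\frac{1}{m-1}}\widehat{W}^{\frac1m}$, via an analogue of \cite[Theorem 6.5]{V07}. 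You instead work \emph{elliptically}: exhaustion by the Dirichlet solutions $W_R$ of \eqref{elliptic}, monotonicity in $R$, a uniform $L^\infty$ bound on $W_R$ extracted from Theorem \ref{thm: sub-poin} through the separated-variables subsolution of Remark \ref{sharp}, and then $W=\sup_R W_R$; minimality comes from elliptic comparison $W_R\le V$ on each ball. The paper's route buys freedom from any comparison principle for the sublinear elliptic operator: all comparison is outsourced to well-documented PME theory. Your route is more self-contained on the elliptic side and produces the minimal solution as a monotone limit of compactly supported profiles, but it leans on exactly the points you flag: (i) sublinear elliptic comparison against a merely \emph{very weak}, non-Lipschitz-nonlinearity supersolution $V$ (one needs $V>0$ via the strong maximum principle, then a Brezis--Kamin/Brezis--Oswald type argument using that $s\mapsto s^{\frac1m-1}$ is decreasing); and (ii) the passage to the limit in the equation, where your dominated-convergence step against $\operatorname{div}(\rho_\mu\nabla\varphi)$ is not licit as written, since for $\rho_\mu$ only in $L^\infty_{\rm loc}$ this expression need not be a function --- you should instead use the weak ($H^1_{\rm loc}$) formulation together with a Caccioppoli estimate, which your uniform $L^\infty$ bound supplies. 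One further small repair: the function $(t+1)^{-\frac{1}{m-1}}W_R^{\frac1m}$ is a subsolution of \eqref{eq-pesata} as written only when $m\le 2$ (a direct computation gives the sign $(1-\frac{1}{m-1})$); for $m>2$ you must first replace $W_R$ by $\lambda W_R$ with $\lambda\le (m-1)^{-\frac{m}{m-1}}$, which only changes your uniform bound by an $m$-dependent constant and does not affect the argument.
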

\begin{proof}
Let $ u $ be the solution to \eqref{eq-pesata} corresponding to some nontrivial initial datum $ u_0 \ge 0 $, with $ u_0 \in L^1(M) $. Let $ U(x,t)=t^{\frac{1}{m-1}}u(x,t) $. The absolute bound \eqref{eq: absb} shows that $ \| U(t) \|_\infty \le B $ for a suitable $B>0$ and all $t>0$. By the classical B\'enilan-Crandall inequality (see e.g.\ \cite{V07}) we know that $ U $ is nondecreasing as a function of $t$; in particular, since $u$ is nonnegative and nontrivial, the limit $ W^{\frac1m}(x)=\lim_{t\to\infty} U(x,t) $ exists and is also nonnegative, nontrivial and bounded. By proceeding as in \cite{Va04}, it is easy to show that $W$ solves \eqref{sublinear}.

As concerns minimality, consider a given very weak solution $\widehat W$, having the stated properties, to \eqref{sublinear}. Consider also the solution $u_R$ to the homogeneous Dirichlet problem, on the ball $B_R$, for the weighted porous medium equation corresponding to the datum $u_0\vert_{B_R}$. An analogue on manifolds of \cite[Theorem 6.5]{V07} yields comparison between $u_R$ and the supersolution $t^{-\frac1{m-1}}\widehat W^{\frac1m}\vert_{B_R}$. Hence, letting $R\to+\infty$ and noting that by construction $u_R\to u$ e.g.\ pointwise, we get
\[
t^{\frac1{m-1}}u(x,t)\le \widehat W^{\frac1m}(x),\ \ {\rm for\ a.e.}\ x\in M,\ t>0,
\]
whence the claim follows by letting $t\to+\infty$ and using the first part of the proof.
\end{proof}

For related results in the non-weighted case see the classical paper \cite{BK}. The case of sublinear elliptic equations posed on the hyperbolic space is briefly discussed in \cite{BGGV}.

\subsection{Sub-Poincar\'e inequalities fail on Cartan-Hadamard manifolds}\label{sec-no-sub}

One may wonder whether examples similar to the ones given above can be shown on manifolds without imposing that suitable weights are present. Surprisingly enough, this is \it never \rm the case at least when Cartan-Hadamard manifolds are concerned. In fact, we have the following result, of independent interest.

\begin{thm}\label{c-h-sub}
Let $ M $ be a Cartan-Hadamard manifold and let $ p \in [1,2) $. Then there exists no positive constant $ C>0 $ for which the sub-Poincar\'e inequality
\begin{equation}\label{sub-1}
\| f \|_p \le C \left\| \nabla{f} \right\|_{2} \quad \forall f \in C_c^\infty(M)
\end{equation}
holds.
\end{thm}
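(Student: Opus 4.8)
The plan is to argue by contradiction, converting the putative inequality \eqref{sub-1} into a statement about the $2$-capacity of geodesic balls, and then into a self-improving differential inequality for the volume function that forces the volume to become infinite at a finite radius.

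First I would fix a pole $o\in M$, write $r(x):=\dist(x,o)$, let $S(r)$ be the area of the geodesic sphere $\partial B_r$ and $V(R):=\vol(B_R)$, so that $V'=S$. Testing \eqref{sub-1} against any $f\in C_c^\infty(M)$ with $f\ge1$ on $\overline{B_R}$ gives $V(R)^{1/p}\le\|f\|_p\le C\|\nabla f\|_2$; taking the infimum over such $f$ yields
\beq
V(R)^{1/p}\le C\,\mathrm{Cap}(\overline{B_R})^{1/2},
\eeq
where $\mathrm{Cap}$ is the variational $2$-capacity. If $M$ were parabolic this capacity would vanish while $V(R)>0$, already a contradiction, so I may assume $M$ non-parabolic. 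On a Cartan-Hadamard manifold $r$ has no cut locus and $|\nabla r|\equiv1$, so the radial competitor $u=\eta(r(x))$ with $\eta(R)=1$ and $\eta'$ proportional to $S^{-1}$ is admissible and gives the standard bound $\mathrm{Cap}(\overline{B_R})\le(\int_R^\infty S(r)^{-1}\,\rd r)^{-1}$. Combining, the necessary condition
\beq\label{planstar}
V(R)^{2/p}\int_R^\infty\frac{\rd r}{S(r)}\le C^2\qquad\forall R>0
\eeq
must hold.

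Next I would use that, by volume comparison (sectional curvature $\le0$), $S$ is nondecreasing, whence $\int_R^\infty S^{-1}\ge\int_R^{R+h}S^{-1}\ge h/S(R+h)$ for every $h>0$. Inserting this into \eqref{planstar} and using $S=V'$ gives $V'(R+h)\ge C^{-2}h\,V(R)^{2/p}$, and integrating in $h\in[0,H]$,
\beq\label{planODE}
V(R+H)\ge V(R)+\frac{V(R)^{2/p}}{2C^2}\,H^2\qquad\forall R,H>0.
\eeq
The crux is that $p<2$ makes $2/p>1$: choosing $H=H_R:=C\sqrt2\,V(R)^{(p-2)/(2p)}$ at least doubles the volume, $V(R+H_R)\ge2V(R)$, and since $(p-2)/(2p)<0$ the successive doubling radii decrease geometrically along the doubling sequence $V(R_k)\ge2^kV(R_0)$. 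Hence $R_k$ converges to a finite $R^\ast$ while $V(R_k)\to\infty$, contradicting the finiteness of $\vol(B_{R^\ast})$ on a complete manifold. This contradiction proves the theorem.

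I expect the main obstacle to be conceptual rather than computational: on manifolds such as $\mathbb{H}^d$ the volume grows exponentially, which is exactly what makes the ordinary Poincar\'e inequality hold and the capacities small, so one is tempted to believe a sub-Poincar\'e inequality could survive as well. The point is that no upper bound on the volume growth is available or needed, and the failure is driven purely by the super-unit exponent $2/p>1$ through \eqref{planODE}; the only geometric inputs are the absence of a cut locus, the monotonicity of $S$, and the finiteness of the volume of balls. As an alternative, and in the spirit of the one-dimensional analysis used elsewhere in the paper, one could restrict \eqref{sub-1} to functions of $r(x)$ and invoke the Kufner-Maz'ya characterization of the resulting weighted Hardy inequality with weight $S$; the relevant integral $\int(\int_0^x S)^{2/(2-p)}(\int_x^\infty S^{-1})^{2(p-1)/(2-p)}S^{-1}\,\rd x$ diverges for every admissible $S$ for the same structural reason, namely the identity $\tfrac{2}{2-p}-\tfrac{2(p-1)}{2-p}=2$.
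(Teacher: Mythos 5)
Your argument is correct, and it reaches the contradiction by a genuinely different mechanism than the paper's. Both proofs test \eqref{sub-1} on radial cutoffs and both rest on the same two geometric facts (on a Cartan-Hadamard manifold $\dist(\cdot,o)$ is smooth away from $o$ with gradient of norm one, and the sphere-area function $S$ is monotone increasing), but the paper's proof is entirely local: it uses a cutoff supported on the annulus $B_{r_0+\delta}\setminus B_{r_0}$ with the width $\delta(r_0)=S^{-1}\left(2S(r_0)\right)-r_0$ chosen adaptively so that $S$ at most doubles there, deduces the recursion $S^{-1}(2s_0)\le S^{-1}(s_0)+C's_0^{-(2-p)/(2+p)}$, and iterates it to conclude that $S^{-1}$ stays bounded, contradicting $S(R)\to\infty$. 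You instead pass through the variational $2$-capacity of balls, which brings in the global tail $\int_R^\infty S^{-1}\,\rd r$ (and forces the parabolic/non-parabolic dichotomy, which you correctly dispose of), and then run the doubling on the volume $V$ rather than on $S$: the inequality $V(R+H)\ge V(R)+V(R)^{2/p}H^2/(2C^2)$ with $2/p>1$ makes the successive doubling radii summable, so $V$ would blow up at a finite radius, impossible since closed balls are compact. The two contradictions are of the same nature (finite-radius blow-up of a monotone geometric quantity), but your route needs slightly less geometry --- nondecreasing $S$ suffices, and you never need $S(R)\to\infty$, which the paper uses to define $\delta$ --- at the price of the capacity machinery and of one step you glossed over: the radial competitor $\eta(r(x))$ realizing the capacity bound is Lipschitz and not compactly supported, so it must be truncated at a finite radius (exhaustion) and mollified before it is admissible in \eqref{sub-1}; this is standard, and the paper's own cutoff $\xi(\dist(x,o))$ requires the same implicit regularization. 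Your closing aside about deducing the result from the Kufner--Opic criterion is plausible but not substantiated as stated; it is not needed for the proof.
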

\begin{proof}
We shall proceed by contradiction. Indeed, take any $ r_0 \in (0,\infty) $. Let $ \delta: (0,\infty) \mapsto (0,\infty) $ be a suitable function that will be specified later, and $ \xi : [0,\infty) \mapsto [0,1] $ be a regular non-increasing function such that
\begin{equation}\label{eq-delta-1}
\xi(r) = 1 \ \ \ \forall r \in [0,r_0) \, , \quad \xi(r)=0 \ \ \ \forall r \in [r_0+\delta(r_0),\infty) \, , \quad \xi(r_0+\delta(r_0)/2)=1/2 \, ,
\end{equation}
\begin{equation}\label{eq-delta-2}
\left| \xi^\prime(r) \right| \le \frac{2}{\delta(r_0)} \ \ \ \forall r \in [0,\infty) \, .
\end{equation}
Now choose any point $ o \in M $ and consider it as a pole. As usual we denote by $ B_R $ the geodesic ball of radius $R$ centered at $o$, and by $ V(R) $ its volume. As a matter of fact,
\begin{equation}\label{eq-delta-3}
x \mapsto \operatorname{dist}(x,o) \in C^\infty(M \setminus \{o\}) \, , \quad \left| \nabla_x \,{ \operatorname{dist}(x,o) } \right| = 1 \quad \forall x \in M \setminus \{o\} \, .
\end{equation}
If \eqref{sub-1} holds, we are then allowed to pick the test function
\begin{equation*}\label{eq-delta-4}
f(x)=\xi\left( \operatorname{dist}(x,o) \right) \quad \forall x \in M \, ,
\end{equation*}
which yields
\begin{equation}\label{eq-delta-5}
\left( \int_{B_{r_0+\delta(r_0)}} \left|f(x)\right|^p \mathrm{d}x  \right)^{\frac{1}{p}} \le C \left( \int_{B_{r_0+\delta(r_0)} \setminus B_{r_0} } \left| \nabla f(x) \right|^2 \mathrm{d}x  \right)^{\frac{1}{2}} \, .
\end{equation}
In view of \eqref{eq-delta-1}, \eqref{eq-delta-2}, \eqref{eq-delta-3} and the fact that $ \xi $ is non-increasing, from \eqref{eq-delta-5} we deduce that
\begin{equation}\label{eq-delta-6}
\frac{1}{2} \left[ V(r_0+\delta(r_0)/2) - V(r_0) \right]^{\frac{1}{p}} \le \frac{2C}{\delta(r_0)} \left[ V(r_0+\delta(r_0)) - V(r_0) \right]^{\frac{1}{2}} \, .
\end{equation}
Since
$$  \frac{\mathrm{d}V(R)}{\mathrm{d}R} = S(R) \quad \forall R>0 \, , $$
where $S(R)$ is the Riemannian measure of $ \partial B_R $ (see e.g.\ \cite[Section 3]{G4}), we can rewrite \eqref{eq-delta-6} as
\begin{equation}\label{eq-delta-7}
\left( \int_{r_0}^{r_0+\delta(r_0)/2} S(r) \, \mathrm{d}r \right)^{\frac{1}{p}} \le \frac{4C}{\delta(r_0)} \left( \int_{r_0}^{r_0+\delta(r_0)} S(r) \, \mathrm{d}r \right)^{\frac{1}{2}} \, .
\end{equation}
Because $M$ is a Cartan-Hadamard manifold, it is well known that the function $ R \mapsto S(R) $ is regular, strictly increasing and such that $ \lim_{R\to\infty} S(R)=\infty $ (see once again \cite[Section 3, Section 15]{G4}. It is therefore possible to choose $ \delta $ as
\begin{equation*}\label{eq-delta-8}
\delta(r)=S^{-1}\left[ 2 S(r) \right] -r \quad \forall r>0 \, ,
\end{equation*}
so that
\begin{equation}\label{eq-delta-9}
S(r_0) \le S(r) \le 2 \, S(r_0) \quad \forall r \in [r_0,r_0+\delta(r_0)] \, .
\end{equation}
In particular, \eqref{eq-delta-7} and \eqref{eq-delta-9} imply
\begin{equation*}\label{eq-delta-10}
\left[ S(r_0) \, \frac{\delta(r_0)}{2} \right]^{\frac{1}{p}} \le \frac{4C}{\delta(r_0)} \left[ 2 \, S(r_0) \delta(r_0) \right]^{\frac{1}{2}} \, ,
\end{equation*}
that is
\begin{equation}\label{eq-delta-11}
 S(r_0)^{\frac{2-p}{2+p}} \, \left[ S^{-1}\left[ 2 S(r_0) \right] -r_0 \right] \le 2 (4C)^{\frac{2p}{2+p}} =: C^\prime \, .
\end{equation}
As \eqref{eq-delta-11} holds for all $ r_0 \in (0,\infty) $, we can let $ r_0=S^{-1}(s_0) $ for any $s_0 \in (0,\infty)$, to get
\begin{equation}\label{eq-delta-12}
S^{-1}\left[ 2 s_0 \right] \le S^{-1}(s_0) + \frac{C^\prime}{s_0^{\frac{2-p}{2+p}}} \quad \forall s_0 \in (0,\infty) \, .
\end{equation}
A straightforward iteration of \eqref{eq-delta-12} yields
\begin{equation}\label{eq-delta-13}
S^{-1}\left( s_0 \right) \le S^{-1}\left(s_0/2^n\right) +  \frac{C^\prime}{s_0^{\frac{2-p}{2+p}}} \, \frac{2^{n\frac{2-p}{2+p}}-1}{1-2^{-\frac{2-p}{2+p}}} \quad \forall s_0 \in (0,\infty) \, , \ \forall n \in \mathbb{N} \, .
\end{equation}
By setting $ n= \log_2(s_0)\rceil $ in \eqref{eq-delta-13}, we find
\begin{equation}\label{eq-delta-14}
S^{-1}\left( s_0 \right) \le S^{-1}\left(1\right) +  \frac{C^\prime}{s_0^{\frac{2-p}{2+p}}} \, \frac{2^{\frac{2-p}{2+p}} \, s_0^{\frac{2-p}{2+p}} -1}{1-2^{-\frac{2-p}{2+p}}} \quad \forall s_0 \in (0,\infty) \, , \ \forall n \in \mathbb{N} \, .
\end{equation}
Letting $ s_0 \to \infty $ in \eqref{eq-delta-14} end up with
$$  \limsup_{s_0 \to \infty} S^{-1}\left( s_0 \right) < \infty \, , $$
a contradiction since $ R \to S(R) $ is monotone increasing.
\end{proof}

\begin{rem}\rm
We point out that in the above result the condition $ p < 2 $ is essential. In fact, on the one hand \eqref{sub-1} holds on any Cartan-Hadamard manifold when $ p=2^\ast $ (let $ d \ge 3 $), on the other hand the proof of Theorem \ref{c-h-sub} clearly fails when $ p \ge 2 $, since the iteration procedure following \eqref{eq-delta-12} does not yield any contradiction in that case.
\end{rem}




\par\bigskip\noindent
\textbf{Acknowledgments.} The authors were partially supported by the PRIN project {\em Equazioni alle derivate parziali di tipo ellittico e parabolico: aspetti geometrici, disuguaglianze collegate, e applicazioni}. They were also supported by the Gruppo Nazionale per l'Analisi Matematica, la Probabilit\`a e le loro Applicazioni (GNAMPA) of the Istituto Nazionale di Alta Matematica (INdAM).

\end{document}